\documentclass{article}
\usepackage{amsmath}
\usepackage[ansinew]{inputenc}
\usepackage{amssymb}
\usepackage{amsthm}
\usepackage{amsfonts}
\usepackage{amscd}
\usepackage{latexsym, amsfonts}
\usepackage{graphicx}
\usepackage{tikz}

\topmargin=0mm
\headheight=0mm
\textwidth=140mm
\textheight=210mm

\title{Mixed labyrinth fractals}

\author{Ligia L. Cristea \thanks{This author is supported by the Austrian Science Fund (FWF), 
Project P27050-N26 and by the Austrain-French cooperation project FWF I1136-N26.} 
\\Karl-Franzens-Universit\"at Graz\\ Institut f\"ur Mathematik und Wissenschaftliches Rechnen
\\Heinrichstrasse 36, 8010 Graz\\Austria\\ \tt{strublistea@gmail.com} 
\and Bertran Steinsky\\ F\"urbergstr. 56, 5020 Salzburg\\Austria\\ \tt{steinsky@finanz.math.tu-graz.ac.at} }

\newtheorem{theorem}{Theorem}
\newtheorem{lemma}{Lemma}
\newtheorem{property}{Property}
\newtheorem{proposition}{Proposition}
\newtheorem{conjecture}{Conjecture}
\newtheorem{corollary}{Corollary}

\newcommand{\A}{\includegraphics{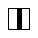}}
\newcommand{\B}{\includegraphics{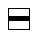}}
\newcommand{\C}{\includegraphics{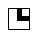}}
\newcommand{\D}{\includegraphics{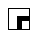}}
\newcommand{\E}{\includegraphics{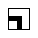}}
\newcommand{\F}{\includegraphics{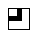}}
\begin{document}

\maketitle

\textbf{Keywords:} fractal, dendrite, tree, graph, length of paths, Sierpi\'nski carpets\\\\
\textbf{AMS Classification:}  14Q05, 26B05, 28A75, 28A80, 51M25, 52A38
\abstract{Labyrinth fractals are self-similar fractals that were introduced and studied in
recent work \cite{laby_4x4, laby_oigemoan}. In the present paper we define and study more general objects, 
called \emph{mixed labyrinth fractals}, that are in general not self-similar and are constructed by using 
sequences of \emph{labyrinth patterns}. We show that mixed labyrinth fractals are dendrites and study 
properties of the paths in the graphs associated to prefractals, and of arcs in the fractal, e.g., 
 the path length and the box counting dimension and length of arcs. We also consider more general objects related to mixed labyrinth 
 fractals, formulate two conjectures about arc lengths, and establish connections to recent results on generalised 
Sierpi\'nski carpets.}
\section{Introduction}\label{sec:introduction}
Labyrinth fractals are self-similar fractals that were introduced and studied by 
Cristea and Steinsky \cite{laby_4x4, laby_oigemoan}.  
In the present paper we deal with \emph{mixed labyrinth fractals} that are a generalisation of the 
labyrinth fractals studied before \cite{laby_4x4, laby_oigemoan}. Mixed labyrinth fractals are fractal 
sets obtained by an iterative 
construction that uses labyrinth patterns, as described in Sections~\ref{sec:Construction} and \ref{sec:Definition}, and are in general not self-similar.
We remind that generalised Sierpi\'nski carpets \cite{connected_general_carpets,totally_disco} studied 
some years ago were also defined with the help of patterns, and are in general not self-similar. 
There are recent results \cite{Luo} on the topology of a class of self-similar Sierpi\'nski carpets called \emph{fractal squares}.
In the case of the mixed labyrinth fractals, there are special restrictions on the patterns, 
that correspond to the properties of labyrinth sets \cite{laby_4x4, laby_oigemoan}. 
Labyrinth patterns have three properties, which we formulate in Section~\ref{sec:Definition}, with the help 
of graphs that we associate to the patterns. 
An example for the first two steps of the construction of a mixed labyrinth fractal is 
illustrated in Figure \ref{fig:A1A2} and \ref{fig:W2}. In Section \ref{sec:Topological properties of mixed labyrinth fractals} we show that mixed 
labyrinth fractals are dendrites. 
Section \ref{sec:Paths} is dedicated to properties of the paths in the graphs associated to the prefractals and of paths in 
the fractal.
In Section \ref{sec:Blocked} we conjecture a property of the length of the path between any two points in a mixed labyrinth fractal 
generated by a sequence of \emph{blocked} labyrinth patterns. In the Sections \ref{sec:Rectangular} and 
\ref{sec:Wild} we discuss other extensions of labyrinth sets and labyrinth fractals. Finally, 
Section \ref{sec:Totally disco} is dedicated to a result on the total disconnectedness of 
generalised Sierpi\'nski carpets generated by complementary patterns of labyrinth patterns. 

Before presenting our results let us remark that labyrinth fractals are strongly related to other mathematical objects studied by mathematicians and physicists. First, we mention that during the last years related objects called ``fractal labyrinths'' have been in the attention of physicists, either in the context of nanostructures \cite{GrachevPotapovGerman2013}, or of fractal reconstructions of images, signals and radar backgrounds \cite{PotapovGermanGrachev2013}, 
who used the formalism for the self-similar case of labyrinth fractals \cite{laby_4x4}  in order to refine their  definition of these objects, or to formulate it with more precision. We note that while our main interest regards properties of the limit set, the physicists focus on what we would call the prefractals of a labyrinth fractal, i.e., on the objects obtained  after a finite numer of steps in the iterative construction of the fractal.

Mixed labyrinth fractals are also related  to the objects introduced 1986 by Falconer \cite{Falconer_randomfractals1986} in the context of random fractals, as \emph{net fractals},  and \emph{random net fractals}. In that probabilistic framework the focus is on the Hausdorff dimension of these random fractals. There, trees and random networks are used in order to define these objects and study their dimension. Due to the very general setting chosen for the mixed labyrinth fractals in the present paper, e.g., to the fact that there are no restrictions on the width of the patterns or on the number of black/white squares in the patterns that generate the mixed labyrinth fractal, we could not apply the results on measures and dimensions obtained for net fractals to the mixed labyrinth fractals. In the same context we also mention the work of Mauldin and Williams on random fractals \cite{MauldinWilliams_random_1986} and on graph directed constructions \cite{MauldinWilliams_graphdirected_1988}.
Moreover, the graph directed Markov systems (GDMS) studied in much detail in the book of Mauldin and Urbanski \cite{MauldinUrbanski_bookGDMS_2003} are also related to the objects that we study here. 
However, in the very general setting of the present paper, regarding the width and the structure of the patterns that generate the mixed labyrinth fractal, we chose to extend the results obtained  for self-similar labyrinth fractals to the case of mixed labyrinth fractals by reasoning whithin the same framework.
For an overview on random fractals we also refer to M\"orters' contributed chapter to  the volume on new perspectives in stochastic geometry \cite{book_stochasticgeometry_2010}.

Finally, let us also mention that there is a lot of ongoing research on
$V$-variable fractals, e.g., \cite{freiberghamblyhutchinson_Vvariable}, that also provide a framework that could be used for certain classes of generalised Sierpi\'nski carpets, and, in particular, families of mixed labyrinth fractals. For $V$-variable fractals and superfractals we also refer to Barnsley's book \cite{barnsley_superfractals}.

To conclude this introductive section, let us remark that in the present paper we focus on geometric and topological properties of mixed labyrinth fractals as a generalisation of the self-similar labyrinth fractals studied before \cite{laby_4x4, laby_oigemoan} and stick to the same setting, to the approach with patterns and no probabilistic frame. Of course, in future work one can use the probabilistic, GDMS, or $V$-variable fractals approach, in order to achieve further results on classes of mixed labyrinth fractals.
\section{Construction}\label{sec:Construction}

In order to construct labyrinth fractals we use \emph{patterns}.
Figures~\ref{fig:A1A2} and \ref{fig:W2} show examples of patterns and illustrate the first two steps of the 
construction described now. 
Let $x,y,q\in [0,1]$ such that $Q=[x,x+q]\times [y,y+q]\subseteq [0,1]\times [0,1]$. 
Then for any point $(z_x,z_y)\in[0,1]\times [0,1]$ we define the function
\[
P_Q(z_x,z_y)=(q z_x+x,q z_y+y).
\]
Let $m\ge 1$. $S_{i,j,m}=\{(x,y)\mid \frac{i}{m}\le x \le \frac{i+1}{m} \mbox{ and } \frac{j}{m}\le y \le \frac{j+1}{m} \}$ and  
${\cal S}_m=\{S_{i,j,m}\mid 0\le i\le m-1 \mbox{ and } 0\le j\le m-1 \}$. 

We call any nonempty ${\cal A} \subseteq {\cal S}_m$ an $m$-\emph{pattern} and $m$ its \emph{width}. Let $\{{\cal A}_k\}_{k=1}^{\infty}$ 
be a sequence of non-empty patterns and $\{m_k\}_{k=1}^{\infty}$ be the corresponding 
\emph{width-sequence}, i.e., for all $k\ge 1$ we have 
${\cal A}_k\subseteq {\cal S}_{m_k}$. 
We denote $m(n)=\prod_{k=1}^n m_k$, for all $n \ge 1$. 
We let ${\cal W}_1={\cal A}_{1}$, and call it the 
\emph{set of white squares of level $1$}. 
Then we define ${\cal B}_1={\cal S}_{m_1} \setminus {\cal W}_1$ 
as the \emph{set of black squares of level $1$}. 
For $n\ge 2$ we define the \emph{set of white squares of level $n$} by 
\begin{equation} \label{eq:W_n}
{\cal W}_n=\bigcup_{W\in {\cal A}_{n}, W_{n-1}\in {\cal W}_{n-1}}\{ P_{W_{n-1}}(W)\}.
\end{equation}

We note that ${\cal W}_n\subset {\cal S}_{m(n)}$, and we define the \emph{set of black squares of level $n$} by ${\cal B}_n={\cal S}_{m(n)} \setminus {\cal W}_n$. For $n\ge 1$, we define $L_n=\bigcup_{W\in {\cal W}_n} W$. 
Therefore, $\{L_n\}_{n=1}^{\infty}$ is a monotonically decreasing sequence of compact sets. 
We write $L_{\infty}=\bigcap_{n=1}^{\infty}L_n$, i.e., the \emph{limit set defined by the sequence of patterns 
$\{{\cal A}_k\}_{k=1}^{\infty}.$ }                                                                                                                                                                                                                                                                                                                                                                                                                                                                                                                                                                                                                                                                                                                                                                                                                                                                                                                                                                                                                              
                                                                                                                                                                                                                                                                                                                                                                                                                                                                                                                                                                                                                                                                                                                                                                                                                                                                                    
\section{Definition of mixed labyrinth fractals}\label{sec:Definition}

A {\em graph} $\mathcal{G}$ is a pair $(\mathcal{V},\mathcal{E})$, where $\mathcal{V}=\mathcal{V}(\mathcal{G})$ is a finite set of vertices, and the set of edges $\mathcal{E}=\mathcal{E}(\mathcal{G})$ is a subset of $\{\{u,v\} \mid u,v \in \mathcal{V}, u\neq v\}$. We write $u \sim v$ if $\{u,v\}\in\mathcal{E}(\mathcal{G})$ and we say $u$ is a \emph{neighbour} of $v$. The sequence of vertices $\{u_i\}_{i=0}^{n}$ is a \emph{path between $u_0$ and $u_n$} in a graph $\mathcal{G}\equiv (\mathcal{V},\mathcal{E})$, if $u_0,u_1,\ldots,u_n\in \mathcal{V}$, $u_{i-1}\sim u_i$ for $1 \le i\le n$, and $u_i\neq u_j$ for $0\le i<j\le n$.  The sequence of vertices $\{u_i\}_{i=0}^{n}$ is a \emph{cycle} in $\mathcal{G}\equiv (\mathcal{V},\mathcal{E})$, if $u_0,u_1,\ldots,u_n\in \mathcal{V}$, $u_{i-1}\sim u_i$ for $1 \le i\le n$, $u_i\neq u_j$ for $1\le i<j\le n$, and $u_0=u_n$. A \emph{tree} 
is a connected graph that contains no cycle. A \emph{connected component} is an equivalence class of the relation, where two vertices 
are related if there is a path between them.
For ${\cal A}\subseteq {{\cal S}_m}$, we define $\mathcal{G}({\cal A})\equiv (\mathcal{V}(\mathcal{G}({\cal A})),\mathcal{E}(\mathcal{G}({\cal A})))$ to be the graph of ${\cal A}$, i.e., the graph whose vertices $\mathcal{V}(\mathcal{G}({\cal A}))$ are the white squares in ${\cal A}$, and whose edges $\mathcal{E}(\mathcal{G}({\cal A}))$ are the unordered pairs of white squares, that have a common side. The \emph{top row} in ${\cal A}$ is the set of all white squares in $\{S_{i,m-1,m}\mid 0\le i\le m-1 \}$. The bottom row, left column, and right column in ${\cal A}$  are defined analogously. A \emph{top exit} in ${\cal A}$ is a white square in the top row, such that there is a white square in the same column in the bottom row. A \emph{bottom exit} in ${\cal A}$  is defined analogously. A \emph{left exit} in ${\cal A}$ is a white square in the left column, such that there is a white square in the same row in the right column. A \emph{right exit} in ${\cal A}$ is defined analogously. 
While a top exit together with the corresponding bottom exit build a \emph{vertical exit pair}, a left exit and the corresponding right exit build a \emph{horizontal exit pair}.

%
%
If ${\cal A}={\cal W}_{n}$,  for $n\ge 1$, we call the top row in ${\cal A}$ the \emph{top row of level} $n$. The \emph{bottom row, left column}, and \emph{right column of level} $n$ are defined analogously.

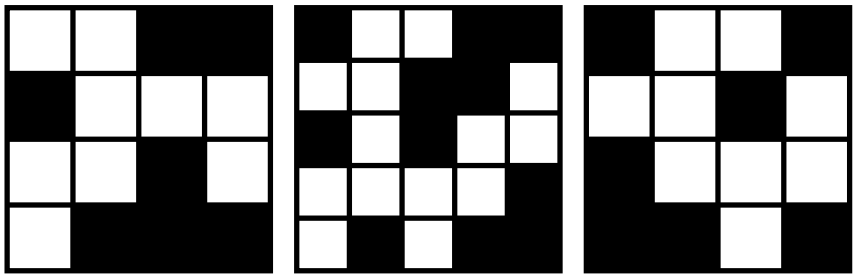
\begin{figure}[hhhh]\label{A1A2}
\begin{center}
\begin{tikzpicture}[scale=.35]
\draw[line width=2pt] (0,0) rectangle (10,10);
\draw[line width=2pt] (2.5, 0) -- (2.5,10);
\draw[line width=2pt] (5, 0) -- (5,10);
\draw[line width=2pt] (7.5, 0) -- (7.5,10);
\draw[line width=2pt] (0, 2.5) -- (10,2.5);
\draw[line width=2pt] (0, 5) -- (10,5);
\draw[line width=2pt] (0, 7.5) -- (10,7.5);
\filldraw[fill=black, draw=black] (2.5,0) rectangle (5, 2.5);
\filldraw[fill=black, draw=black] (5,0) rectangle (7.5, 2.5);
\filldraw[fill=black, draw=black] (7.5,0) rectangle (10, 2.5);
\filldraw[fill=black, draw=black] (5,2.5) rectangle (7.5, 5);
\filldraw[fill=black, draw=black] (0,5) rectangle (2.5, 7.5);
\filldraw[fill=black, draw=black] (5,7.5) rectangle (7.5, 10);
\filldraw[fill=black, draw=black] (7.5,7.5) rectangle (10, 10);
\draw[line width=2pt] (11,0) rectangle (21,10);
\draw[line width=2pt] (13, 0) -- (13,10);
\draw[line width=2pt] (15, 0) -- (15,10);
\draw[line width=2pt] (17, 0) -- (17,10);
\draw[line width=2pt] (19, 0) -- (19,10);
\draw[line width=2pt] (11, 2) -- (21,2);
\draw[line width=2pt] (11, 4) -- (21,4);
\draw[line width=2pt] (11, 6) -- (21,6);
\draw[line width=2pt] (11, 8) -- (21,8);
\filldraw[fill=black, draw=black] (13,0) rectangle (15, 2);
\filldraw[fill=black, draw=black] (17,0) rectangle (19, 2);
\filldraw[fill=black, draw=black] (19,0) rectangle (21, 2);
\filldraw[fill=black, draw=black] (19,2) rectangle (21, 4);
\filldraw[fill=black, draw=black] (11,4) rectangle (13, 6);
\filldraw[fill=black, draw=black] (15,4) rectangle (17, 6);
\filldraw[fill=black, draw=black] (15,6) rectangle (17, 8);
\filldraw[fill=black, draw=black] (17,6) rectangle (19, 8);
\filldraw[fill=black, draw=black] (11,8) rectangle (13, 10);
\filldraw[fill=black, draw=black] (17,8) rectangle (19, 10 );
\filldraw[fill=black, draw=black] (19,8) rectangle (21, 10);
\draw[line width=2pt] (22,0) rectangle (32,10);
\draw[line width=2pt] (24.5, 0) -- (24.5,10);
\draw[line width=2pt] (27,0) -- (27,10);
\draw[line width=2pt] (29.5, 0) -- (29.5,10);
\draw[line width=2pt] (22, 2.5) -- (32,2.5);
\draw[line width=2pt] (22, 5) -- (32,5);
\draw[line width=2pt] (22, 7.5) -- (32,7.5);
\filldraw[fill=black, draw=black] (22,0) rectangle (24.5, 5);
\filldraw[fill=black, draw=black] (22,7.5) rectangle (24.5, 10);
\filldraw[fill=black, draw=black] (24.5,0) rectangle (27, 2.5);
\filldraw[fill=black, draw=black] (27,5) rectangle (29.5, 7.5);
\filldraw[fill=black, draw=black] (29.5,7.5) rectangle (32, 10);
\filldraw[fill=black, draw=black] (29.5,0) rectangle (32, 2.5);
\end{tikzpicture}
\caption{Three labyrinth patterns, ${\cal A}_1$ (a $4$-pattern), ${\cal A}_2$ (a $5$-pattern), and ${\cal A}_3$ (a $4$-pattern)}\label{fig:A1A2}
\end{center}
\end{figure}

A non-empty $m$-pattern ${\cal A} \subseteq {{\cal S}_m}$, $m \ge 3$ is called a $m\times m$-\emph{labyrinth pattern} (in short, \emph{labyrinth pattern}) if  ${\cal A}$ satisfies Property~\ref{prop1}, Property~\ref{prop2}, and Property~\ref{prop3}.
\begin{property}\label{prop1}
$\mathcal{G}({\cal A})$ is a tree.
\end{property}
\begin{property}\label{prop2}
${\cal A}$ has exactly one vertical exit pair, and exactly one horizontal exit pair.
\end{property}
\begin{property}\label{prop3}
If there is a white square in ${\cal A}$ at a corner of ${\cal A}$, then there is no white square in ${\cal A}$ at the diagonally opposite corner of ${\cal A}$. 
\end{property}

Let $\{{\cal A}_k\}_{k=1}^{\infty}$ be a sequence of non-empty patterns,  with $m_k\ge 3$, $n\ge 1$ and ${\cal W}_n$ the corresponding set of white squares of level $n$. We call ${\cal W}_{n}$ an $m(n)\times m(n)$-\emph{mixed labyrinth set} (in short, \emph{labyrinth set}), if ${\cal A} ={\cal W}_{n}$ satisfies Property~\ref{prop1}, Property~\ref{prop2}, and Property~\ref{prop3}.
\begin{figure}[hhhh]\label{W2}
\begin{center}
\begin{tikzpicture}[scale=.2]
\draw[line width=1pt] (0,0) rectangle (20,20);
\draw[line width=0.8pt] (5, 0) -- (5,20);
\draw[line width=0.8pt] (10, 0) -- (10,20);
\draw[line width=0.8pt] (15, 0) -- (15,20);
\draw[line width=0.8pt] (0, 5) -- (20,5);
\draw[line width=0.8pt] (0, 10) -- (20,10);
\draw[line width=0.8pt] (0, 15) -- (20,15);
\draw[line width=0.5pt] (1, 0) -- (1,20);
\draw[line width=0.5pt] (2, 0) -- (2,20);
\draw[line width=0.5pt] (3, 0) -- (3,20);
\draw[line width=0.5pt] (4, 0) -- (4,20);
\draw[line width=0.5pt] (6, 0) -- (6,20);
\draw[line width=0.5pt] (7, 0) -- (7,20);
\draw[line width=0.5pt] (8, 0) -- (8,20);
\draw[line width=0.5pt] (9, 0) -- (9,20);
\draw[line width=0.5pt] (11, 0) -- (11,20);
\draw[line width=0.5pt] (12, 0) -- (12,20);
\draw[line width=0.5pt] (13, 0) -- (13,20);
\draw[line width=0.5pt] (14, 0) -- (14,20);
\draw[line width=0.5pt] (16, 0) -- (16,20);
\draw[line width=0.5pt] (17, 0) -- (17,20);
\draw[line width=0.5pt] (18, 0) -- (18,20);
\draw[line width=0.5pt] (19, 0) -- (19,20);
\draw[line width=0.5pt] (0, 1) -- (20,1);
\draw[line width=0.5pt] (0, 2) -- (20,2);
\draw[line width=0.5pt] (0, 3) -- (20,3);
\draw[line width=0.5pt] (0, 4) -- (20,4);
\draw[line width=0.5pt] (0, 6) -- (20,6);
\draw[line width=0.5pt] (0, 7) -- (20,7);
\draw[line width=0.5pt] (0, 8) -- (20,8);
\draw[line width=0.5pt] (0, 9) -- (20,9);
\draw[line width=0.5pt] (0, 11) -- (20,11);
\draw[line width=0.5pt] (0, 12) -- (20,12);
\draw[line width=0.5pt] (0, 13) -- (20,13);
\draw[line width=0.5pt] (0, 14) -- (20,14);
\draw[line width=0.5pt] (0, 16) -- (20,16);
\draw[line width=0.5pt] (0, 17) -- (20,17);
\draw[line width=0.5pt] (0, 18) -- (20,18);
\draw[line width=0.5pt] (0, 19) -- (20,19);
\filldraw[fill=black, draw=black] (5,0) rectangle (10, 5);
\filldraw[fill=black, draw=black] (10,0) rectangle (15, 5);
\filldraw[fill=black, draw=black] (15,0) rectangle (20, 5);
\filldraw[fill=black, draw=black] (10,5) rectangle (15, 10);
\filldraw[fill=black, draw=black] (0,10) rectangle (5, 15);
\filldraw[fill=black, draw=black] (10,15) rectangle (15, 20);
\filldraw[fill=black, draw=black] (15,15) rectangle (20, 20);
\filldraw[fill=black, draw=black] (1,0) rectangle (2, 1);
\filldraw[fill=black, draw=black] (3,0) rectangle (4, 1);
\filldraw[fill=black, draw=black] (4,0) rectangle (5, 1);
\filldraw[fill=black, draw=black] (4,1) rectangle (5, 2);
\filldraw[fill=black, draw=black] (0,2) rectangle (1, 3);
\filldraw[fill=black, draw=black] (2,2) rectangle (3, 3);
\filldraw[fill=black, draw=black] (2,3) rectangle (3, 4);
\filldraw[fill=black, draw=black] (3,3) rectangle (4, 4);
\filldraw[fill=black, draw=black] (0,4) rectangle (1, 5);
\filldraw[fill=black, draw=black] (3,4) rectangle (4, 5 );
\filldraw[fill=black, draw=black] (4,4) rectangle (5, 5);
\filldraw[fill=black, draw=black] (1,5) rectangle (2, 6);
\filldraw[fill=black, draw=black] (3,5) rectangle (4, 6);
\filldraw[fill=black, draw=black] (4,5) rectangle (5, 6);
\filldraw[fill=black, draw=black] (4,6) rectangle (5, 7);
\filldraw[fill=black, draw=black] (0,7) rectangle (1, 8);
\filldraw[fill=black, draw=black] (2,7) rectangle (3, 8);
\filldraw[fill=black, draw=black] (2,8) rectangle (3, 9);
\filldraw[fill=black, draw=black] (3,8) rectangle (4, 9);
\filldraw[fill=black, draw=black] (0,9) rectangle (1, 10);
\filldraw[fill=black, draw=black] (3,9) rectangle (4, 10);
\filldraw[fill=black, draw=black] (4,9) rectangle (5, 10);
\filldraw[fill=black, draw=black] (6,5) rectangle (7, 6);
\filldraw[fill=black, draw=black] (8,5) rectangle (9, 6);
\filldraw[fill=black, draw=black] (9,5) rectangle (10, 6);
\filldraw[fill=black, draw=black] (9,6) rectangle (10, 7);
\filldraw[fill=black, draw=black] (5,7) rectangle (6, 8);
\filldraw[fill=black, draw=black] (7,7) rectangle (8, 8);
\filldraw[fill=black, draw=black] (7,8) rectangle (8, 9);
\filldraw[fill=black, draw=black] (8,8) rectangle (9, 9);
\filldraw[fill=black, draw=black] (5,9) rectangle (6, 10);
\filldraw[fill=black, draw=black] (8,9) rectangle (9, 10);
\filldraw[fill=black, draw=black] (9,9) rectangle (10, 10);
\filldraw[fill=black, draw=black] (6,10) rectangle (7, 11);
\filldraw[fill=black, draw=black] (8,10) rectangle (9, 11);
\filldraw[fill=black, draw=black] (9,10) rectangle (10, 11);
\filldraw[fill=black, draw=black] (9,11) rectangle (10, 12);
\filldraw[fill=black, draw=black] (7,12) rectangle (6, 13);
\filldraw[fill=black, draw=black] (7,12) rectangle (8, 13);
\filldraw[fill=black, draw=black] (7,13) rectangle (8, 14);
\filldraw[fill=black, draw=black] (8,13) rectangle (9, 14);
\filldraw[fill=black, draw=black] (5,14) rectangle (6, 15);
\filldraw[fill=black, draw=black] (8,14) rectangle (9, 15 );
\filldraw[fill=black, draw=black] (9,14) rectangle (10, 15);
\filldraw[fill=black, draw=black] (11,10) rectangle (12, 11);
\filldraw[fill=black, draw=black] (13,10) rectangle (14, 11);
\filldraw[fill=black, draw=black] (14,10) rectangle (15, 11);
\filldraw[fill=black, draw=black] (14,11) rectangle (15, 12);
\filldraw[fill=black, draw=black] (10,12) rectangle (11, 13);
\filldraw[fill=black, draw=black] (12,12) rectangle (13, 13);
\filldraw[fill=black, draw=black] (12,13) rectangle (13, 14);
\filldraw[fill=black, draw=black] (13,13) rectangle (14, 14);
\filldraw[fill=black, draw=black] (10,14) rectangle (11, 15);
\filldraw[fill=black, draw=black] (13,14) rectangle (14, 15 );
\filldraw[fill=black, draw=black] (14,14) rectangle (15, 15);
\filldraw[fill=black, draw=black] (16,5) rectangle (17, 6);
\filldraw[fill=black, draw=black] (18,5) rectangle (19, 6);
\filldraw[fill=black, draw=black] (19,5) rectangle (20, 6);
\filldraw[fill=black, draw=black] (19,6) rectangle (20, 7);
\filldraw[fill=black, draw=black] (15,7) rectangle (16, 8);
\filldraw[fill=black, draw=black] (17,7) rectangle (18, 8);
\filldraw[fill=black, draw=black] (17,8) rectangle (18, 9);
\filldraw[fill=black, draw=black] (18,8) rectangle (19, 9);
\filldraw[fill=black, draw=black] (15,9) rectangle (16, 10);
\filldraw[fill=black, draw=black] (18,9) rectangle (19, 10 );
\filldraw[fill=black, draw=black] (19,9) rectangle (20, 10);
\filldraw[fill=black, draw=black] (16,10) rectangle (17, 11);
\filldraw[fill=black, draw=black] (18,10) rectangle (19, 11);
\filldraw[fill=black, draw=black] (19,10) rectangle (20, 11);
\filldraw[fill=black, draw=black] (19,11) rectangle (20, 12);
\filldraw[fill=black, draw=black] (15,12) rectangle (16, 13);
\filldraw[fill=black, draw=black] (17,12) rectangle (18, 13);
\filldraw[fill=black, draw=black] (17,13) rectangle (18, 14);
\filldraw[fill=black, draw=black] (18,13) rectangle (19, 14);
\filldraw[fill=black, draw=black] (15,14) rectangle (16, 15);
\filldraw[fill=black, draw=black] (18,14) rectangle (19, 15);
\filldraw[fill=black, draw=black] (19,14) rectangle (20, 15);
\filldraw[fill=black, draw=black] (1,15) rectangle (2, 16);
\filldraw[fill=black, draw=black] (3,15) rectangle (4, 16);
\filldraw[fill=black, draw=black] (4,15) rectangle (5, 16);
\filldraw[fill=black, draw=black] (4,16) rectangle (5, 17);
\filldraw[fill=black, draw=black] (0,17) rectangle (1, 18);
\filldraw[fill=black, draw=black] (2,17) rectangle (3, 18);
\filldraw[fill=black, draw=black] (2,18) rectangle (3, 19);
\filldraw[fill=black, draw=black] (3,18) rectangle (4, 19);
\filldraw[fill=black, draw=black] (0,19) rectangle (1, 20);
\filldraw[fill=black, draw=black] (3,19) rectangle (4, 20);
\filldraw[fill=black, draw=black] (4,19) rectangle (5, 20);
\filldraw[fill=black, draw=black] (6,15) rectangle (7, 16);
\filldraw[fill=black, draw=black] (8,15) rectangle (9, 16);
\filldraw[fill=black, draw=black] (9,15) rectangle (10, 16);
\filldraw[fill=black, draw=black] (9,16) rectangle (10, 17);
\filldraw[fill=black, draw=black] (5,17) rectangle (6, 18);
\filldraw[fill=black, draw=black] (7,17) rectangle (8, 18);
\filldraw[fill=black, draw=black] (7,18) rectangle (8, 19);
\filldraw[fill=black, draw=black] (8,18) rectangle (9, 19);
\filldraw[fill=black, draw=black] (5,19) rectangle (6, 20);
\filldraw[fill=black, draw=black] (8,19) rectangle (9, 20);
\filldraw[fill=black, draw=black] (9,19) rectangle (10, 20);
\filldraw[fill=black, draw=black] (16,15) rectangle (17, 16);
\filldraw[fill=black, draw=black] (18,15) rectangle (19, 16);
\filldraw[fill=black, draw=black] (19,15) rectangle (20, 16);
\filldraw[fill=black, draw=black] (19,16) rectangle (20, 17);
\filldraw[fill=black, draw=black] (15,17) rectangle (16, 18);
\filldraw[fill=black, draw=black] (17,17) rectangle (18, 18);
\filldraw[fill=black, draw=black] (17,18) rectangle (18, 19);
\filldraw[fill=black, draw=black] (18,18) rectangle (19, 19);
\filldraw[fill=black, draw=black] (15,19) rectangle (16, 20);
\filldraw[fill=black, draw=black] (18,19) rectangle (19, 20);
\filldraw[fill=black, draw=black] (19,19) rectangle (20, 20);
\end{tikzpicture}
\caption{The set ${\cal W}_2$, constructed based on the above patterns ${\cal A}_1$ and ${\cal A}_2$, that
can also be viewed as a $20$-pattern} \label{fig:W2}
\end{center}
\end{figure}
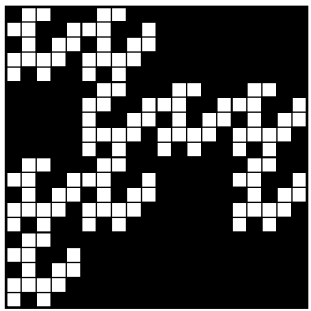

\begin{figure}[hhhh]
\begin{center}
\includegraphics[scale=0.4]{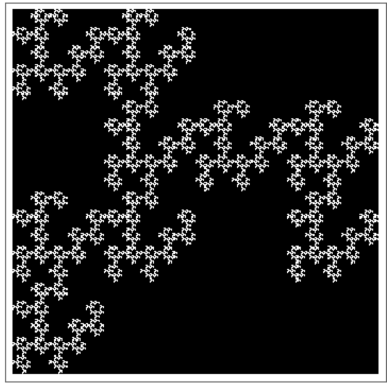}
\caption{A prefractal of the labyrinth fractal defined by a sequence of patterns that starts with the first theree patterns are the patterns ${\cal A}_1, {\cal A}_2, {\cal A}_3$ from Figure 1, and the fourth is ${\cal A}_1$}
\end{center}
\end{figure}

\par \noindent{\bf Remark.} Any labyrinth pattern is a labyrinth set, and any mixed labyrinth set can be 
seen as a labyrinth pattern. We use two distinct notions, in order to make it clearer that in general the 
construction of a (mixed) labyrinth set is based on a sequence of labyrinth patterns.



\section{Topological properties of mixed labyrinth fractals}\label{sec:Topological properties of mixed labyrinth fractals}
\begin{lemma}\label{lemma:Prop123} Let $\{{\cal A}_k\}_{k=1}^{\infty}$ be a sequence of non-empty patterns,  
$m_k\ge 3$, and $n\ge 1$. If ${\cal A}_1,\dots{\cal A}_n$ are labyrinth patterns,  then ${\cal W}_n$ is an $m(n)\times m(n)$-labyrinth set, for all $n \ge 1$, where 
$m(n)=\prod_{k=1}^n m_k$.
\end{lemma}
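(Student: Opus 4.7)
The plan is to proceed by induction on $n$. The base case $n=1$ is immediate because $\mathcal{W}_1=\mathcal{A}_1$ is a labyrinth pattern by hypothesis. For the inductive step, assume that $\mathcal{W}_{n-1}$ is an $m(n-1)\times m(n-1)$-labyrinth set; then $\mathcal{W}_n$ is, by formula~\eqref{eq:W_n}, obtained from $\mathcal{W}_{n-1}$ by replacing each white square $W_{n-1}$ with the scaled copy $P_{W_{n-1}}(\mathcal{A}_n)$ of the labyrinth pattern $\mathcal{A}_n$. I then verify Properties~\ref{prop1}, \ref{prop2}, \ref{prop3} for $\mathcal{W}_n$ in increasing order of difficulty.

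Property~\ref{prop3} is handled first. A corner square of $\mathcal{W}_n$ (say the top-left one) is white if and only if the top-left corner of $\mathcal{W}_{n-1}$ is white \emph{and} the top-left corner of $\mathcal{A}_n$ is white; similarly for the other three corners. If two diagonally opposite corners of $\mathcal{W}_n$ were both white, this would force the two diagonally opposite corners of $\mathcal{W}_{n-1}$ (and also of $\mathcal{A}_n$) to be white, contradicting Property~\ref{prop3} for $\mathcal{W}_{n-1}$ (known by the inductive hypothesis). Property~\ref{prop2} is handled next. Every column index $0\le i< m(n)$ factors uniquely as $i=j\cdot m_n+k$ with $0\le j< m(n-1)$ and $0\le k< m_n$; a top exit of $\mathcal{W}_n$ at column $i$ corresponds precisely to a top exit of $\mathcal{W}_{n-1}$ at column $j$ together with a top exit of $\mathcal{A}_n$ at column $k$. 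The inductive hypothesis gives a unique such $j$ and Property~\ref{prop2} for $\mathcal{A}_n$ a unique such $k$, hence a unique $i$. The horizontal exit pair is handled symmetrically.

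The main obstacle is Property~\ref{prop1}: showing that $\mathcal{G}(\mathcal{W}_n)$ is a tree. The key structural observation is that $\mathcal{G}(\mathcal{W}_n)$ decomposes into ``blocks'' indexed by $\mathcal{W}_{n-1}$: inside each white square $W_{n-1}\in\mathcal{W}_{n-1}$ sits a subgraph isomorphic to the tree $\mathcal{G}(\mathcal{A}_n)$, and an edge of $\mathcal{G}(\mathcal{W}_n)$ crosses the side shared by two adjacent level-$(n-1)$ white squares if and only if it is produced by an exit of $\mathcal{A}_n$ on that side. Property~\ref{prop2} for $\mathcal{A}_n$ guarantees that exactly one such cross-edge is created for each edge of $\mathcal{G}(\mathcal{W}_{n-1})$. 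Writing $N=|\mathcal{V}(\mathcal{G}(\mathcal{W}_{n-1}))|$ and $p=|\mathcal{V}(\mathcal{G}(\mathcal{A}_n))|$, one gets $Np$ vertices and $N(p-1)+(N-1)=Np-1$ edges in $\mathcal{G}(\mathcal{W}_n)$, so it suffices to show connectedness (or, alternatively, acyclicity).

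Connectedness follows because each block is connected and the cross-edges span the connected graph $\mathcal{G}(\mathcal{W}_{n-1})$ after contracting each block to a point. Acyclicity can be argued directly: any cycle in $\mathcal{G}(\mathcal{W}_n)$ must either lie entirely inside one block (impossible, as each block is a tree) or cross between blocks; contracting each block to a point converts such a cycle into a closed walk in $\mathcal{G}(\mathcal{W}_{n-1})$, and the ``exactly one cross-edge per adjacent pair'' rule ensures this walk reuses no edge, producing a cycle in $\mathcal{G}(\mathcal{W}_{n-1})$ and contradicting the inductive tree hypothesis. The delicate point — and the one I would spell out carefully — is justifying this last contraction argument, in particular that the cycle really induces a non-degenerate cycle (rather than a backtracking walk) in the quotient, which is exactly where Property~\ref{prop2} of $\mathcal{A}_n$ is used.
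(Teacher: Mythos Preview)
Your proof is correct and follows essentially the same inductive route as the paper, which here merely refers to \cite[Lemma~2]{laby_4x4} and notes that one must replace the role of $\mathcal{G}(\mathcal{W}_1)$ by $\mathcal{G}(\mathcal{A}_n)$ in the tree and exit arguments; you have simply written out that argument in full. The vertex/edge count $|\mathcal{V}|=Np$, $|\mathcal{E}|=N(p-1)+(N-1)=Np-1$ together with connectedness is the clean way to obtain Property~\ref{prop1}, and your handling of Properties~\ref{prop2} and~\ref{prop3} via the factorisation $i=j\,m_n+k$ and the corner analysis is exactly what the referenced proof does.
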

\begin{proof} The proof is almost literally the same as in the case of self-similar labyrinth fractals 
\cite[Lemma 2]{laby_4x4}. It only differs in two points in the second half of the proof: here  
instead of using the fact that $\mathcal{G}({\cal W}_{1})$ is a 
tree, we use the argument that $\mathcal{G}({\cal A}_{n})$ is a 
tree, and instead of using Property 2 for $\mathcal{G}({\cal W}_{1})$, we use Property 2 for $\mathcal{G}({\cal A}_{n})$.  

\end{proof}
 The limit set $L_{\infty}$ defined by a sequence $\{{\cal A}_k\}_{k=1}^{\infty}$ of labyrinth patterns is called \emph{mixed labyrinth fractal}. For $n\ge 1$, we define $\mathcal{G}({\cal B}_{n})\equiv (\mathcal{V}(\mathcal{G}({\cal B}_{n})),\mathcal{E}(\mathcal{G}({\cal B}_{n})))$ to be the graph whose vertices $\mathcal{V}(\mathcal{G}({\cal B}_{n}))$ are the black squares in ${\cal B}_{n}$, and whose edges $\mathcal{E}(\mathcal{G}({\cal B}_{n}))$ are the unordered pairs of black squares, that have a common side or a common corner. A \emph{border square of level $n$} is a square that lies in the right column, the left column, the top row, or the bottom row, of level $n$ respectively. 
\begin{lemma}\label{lemma:Steinhaus} Let $\{{\cal A}_k\}_{k=1}^{\infty}$ be a sequence of labyrinth patterns,  $m_k\ge 3$ and $n\ge 1$. From every black square in $\mathcal{G}({\cal B}_{n})$ there is a path in $\mathcal{G}({\cal B}_{n})$ to a black border square of level $n$ in $\mathcal{G}({\cal B}_{n})$.
\end{lemma}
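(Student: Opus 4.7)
The plan is to argue by contradiction, using Lemma~\ref{lemma:Prop123} which guarantees that $\mathcal{G}(\mathcal{W}_n)$ is a tree. Suppose some black square $B_0 \in \mathcal{B}_n$ admits no path in $\mathcal{G}(\mathcal{B}_n)$ to any black border square of level $n$, and let $C$ be the connected component of $B_0$ in $\mathcal{G}(\mathcal{B}_n)$. Then $C$ contains no border square. Moreover, by the maximality of $C$ and the fact that the edges of $\mathcal{G}(\mathcal{B}_n)$ include both side- and corner-adjacencies, every square of $\mathcal{S}_{m(n)} \setminus C$ that shares a side, or even only a corner, with some square of $C$ must itself be white.

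Set $K = \bigcup_{S \in C} S \subset [0,1]^2$; this is a compact polygonal set disjoint from $\partial [0,1]^2$. The geometric content of the argument is then to produce a cycle in $\mathcal{G}(\mathcal{W}_n)$ that encircles $K$. Let $V$ be the unbounded component of $\mathbb{R}^2 \setminus K$, and let $\gamma$ be the outer boundary of $K$ with respect to $V$. Since $K$ is disjoint from $\partial [0,1]^2$, the curve $\gamma$ lies strictly inside $[0,1]^2$. Traverse $\gamma$ in a fixed orientation; at each point of $\gamma$ the square of $\mathcal{S}_{m(n)}$ lying immediately on the outer side is not in $C$, and by the observation above it must be a white square in $\mathcal{W}_n$. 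Recording the successive distinct outer squares produces a sequence $W_0, W_1, \dots, W_\ell$ of white squares, with $W_\ell = W_0$ after one full revolution.

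The key verification is that $W_{i-1} \sim W_i$ in $\mathcal{G}(\mathcal{W}_n)$ (that is, $W_{i-1}$ and $W_i$ share a full side) for each $1 \le i \le \ell$. This reduces to a local analysis at each lattice vertex traversed by $\gamma$, classified by the number $k$ of the four incident squares that belong to $C$. The cases $k=1$ and $k=3$ are immediate: the two outer squares on either side of the vertex share a full edge. The only delicate case is $k=2$ with the two $C$-squares meeting diagonally (a pinch); here we resolve the boundary by letting $\gamma$ turn around each of the two $C$-squares separately at that vertex, which again leaves consecutive outer squares side-adjacent. Once this is checked, $W_0, W_1, \dots, W_\ell = W_0$ is a cycle in $\mathcal{G}(\mathcal{W}_n)$, contradicting Lemma~\ref{lemma:Prop123}.

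The main obstacle is precisely this local case analysis at the lattice vertices of $\gamma$, and in particular handling the diagonal-pinch configuration so that the resulting list of outer squares is an honest cycle of side-adjacent white squares rather than merely a chain that is only corner-adjacent at the pinch points. Everything else (connectedness of $C$, properness of $K$ inside the unit square, existence of the outer boundary $\gamma$) is standard planar combinatorics.
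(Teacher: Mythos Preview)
Your approach is the same one the paper intends (it defers to \cite[Lemma~2]{laby_4x4}, which uses Lemma~\ref{lemma:Prop123} in exactly this way: an ``island'' of black squares not reaching the border forces a surrounding cycle of white squares, contradicting that $\mathcal{G}(\mathcal{W}_n)$ is a tree). So the strategy is right.

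There is, however, a slip in your local analysis at the $k=1$ vertices. If exactly one of the four incident squares lies in $C$, the boundary $\gamma$ makes a \emph{convex} turn there, and the outer squares along the two incident boundary edges are the two squares diagonally opposite each other across the vertex; they do \emph{not} share a side. (Concretely: if only the SW square is in $C$, then along the top edge of SW the outer square is NW, along the right edge of SW the outer square is SE, and NW, SE meet only at the vertex.) Your sentence ``the two outer squares on either side of the vertex share a full edge'' is therefore false for $k=1$. The fix is immediate and uses precisely the hypothesis you already isolated: the remaining square at that vertex (NE in the example) is corner-adjacent to the $C$-square, hence not in $C$, hence white, and it is side-adjacent to both NW and SE; insert it into the sequence. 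Conversely, your $k=3$ case is not really about two outer squares at all: at a concave corner the same single outer square sits on both incident boundary edges, so no new adjacency needs to be checked. With the $k=1$ patch your closed walk of white squares is side-adjacent throughout, encircles $B_0$, and hence yields a cycle in $\mathcal{G}(\mathcal{W}_n)$, giving the desired contradiction.
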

\begin{proof} The proof  uses Lemma \ref{lemma:Prop123} and is literally the same as in the case of the labyrinth sets occuring in the construction of self-similar 
labyrinth fractals \cite[Lemma 2]{laby_4x4}.

\end{proof}
A function is a \emph{homeomorphism} if it is bijective, continuous, and its inverse is continuous. A topological space $X$ is an \emph{arc} if there is a homeomorphism $h$ from $[0,1]$ to $X$. We say $X$ is an arc between $h(0)$ and $h(1)$.
For the following two results we skip the proof, since the proofs given in the self-similar case \cite{laby_4x4} work also in the more general case of the 
mixed labyrinth sets. For more details and definitions we refer to the mentioned papers \cite{laby_4x4, laby_oigemoan}.

\begin{lemma}\label{lemma:border} Let $\{{\cal A}_k\}_{k=1}^{\infty}$ be a sequence of labyrinth patterns,  $m_k\ge 3$. If $x$ is a point in $([0,1]\times [0,1]) \setminus L_n$, then there is an arc $a \subseteq ([0,1]\times [0,1]) \setminus L_{n+1}$ between $x$ and a point in the boundary $fr([0,1]\times [0,1])$.
\end{lemma}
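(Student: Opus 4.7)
Since $x \in [0,1]^2 \setminus L_n$ and $\mathcal{S}_{m(n)}$ tiles $[0,1]^2$, the point $x$ lies in some closed black square $B_0 \in \mathcal{B}_n$. Lemma~\ref{lemma:Steinhaus} yields a path $B_0, B_1, \ldots, B_k$ in $\mathcal{G}(\mathcal{B}_n)$ ending at a border square $B_k$, where consecutive $B_{i-1}, B_i$ share either a common side or just a common corner. My plan is to build the desired arc as a concatenation: first from $x$ to an interior point $p_0$ of $B_0$, then through successive transitions $p_{i-1}\rightsquigarrow p_i$ with $p_i \in \mathrm{int}(B_i)$, and finally from $p_k$ through $\mathrm{int}(B_k)$ to a point of $fr([0,1]\times[0,1])$ on one of its sides contained in this boundary (which exists since $B_k$ is a border square).

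The easy observation is that, because the pattern $\mathcal{A}_{n+1}$ is applied only to level-$n$ white squares, every level-$n$ black square $B$ is partitioned by $\mathcal{S}_{m(n+1)}$ into black level-$n+1$ sub-squares only. Hence $\mathrm{int}(B)\subseteq [0,1]^2\setminus L_{n+1}$, and the same holds for the relative interior of any side shared between two level-$n$ black squares. Thus the initial leg into $\mathrm{int}(B_0)$, the final leg to $fr([0,1]\times[0,1])$, and every side-sharing transition (through an interior point of the shared side) can all be drawn as short straight segments in $[0,1]^2\setminus L_{n+1}$.

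The main obstacle is the corner-sharing transition, and this is exactly where Property~\ref{prop3} enters. Suppose $B_{i-1}$ and $B_i$ meet only at an interior vertex $c$, and let $Y_a, Y_b$ be the two level-$n$ squares diagonally opposite at $c$ that complete the $2\times 2$ block around $c$. Of the four level-$n+1$ sub-squares meeting at $c$, the two inside $B_{i-1}$ and $B_i$ are automatically black. I claim that at least one of the remaining two is also black: if $Y_a$ or $Y_b$ belongs to $\mathcal{B}_n$, this is immediate; otherwise $Y_a, Y_b \in \mathcal{W}_n$, their sub-squares at $c$ sit at two diagonally opposite corner positions of $\mathcal{A}_{n+1}$, and Property~\ref{prop3} applied to $\mathcal{A}_{n+1}$ forces at least one of them to be black. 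Pick such a black sub-square $Q$, contained in some $Y \in \{Y_a, Y_b\}$. Its two sides meeting at $c$ are sub-sides of the sides $Y\cap B_{i-1}$ and $Y\cap B_i$, and across each of these sub-sides lies a black level-$n+1$ sub-square of $B_{i-1}$ or $B_i$. I route the detour arc through $\mathrm{int}(Q)$, entering from one of these sub-sides and exiting through the other; the whole detour stays in the union of closures of black level-$n+1$ sub-squares, hence in $[0,1]^2\setminus L_{n+1}$.

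Concatenating all legs yields a continuous curve from $x$ to $fr([0,1]\times[0,1])$ lying in $[0,1]^2\setminus L_{n+1}$. Any self-intersections can be removed leg by leg using the standard fact that two distinct points in a path-connected open subset of $\mathbb{R}^2$ are joined by an arc; choosing the interior points $p_i$ and the polygonal connectors in general position inside the relevant open black sub-square interiors produces the required arc.
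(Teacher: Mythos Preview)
Your argument is correct and follows precisely the route the paper has in mind: the paper skips the proof here, deferring to the self-similar case in \cite{laby_4x4}, where exactly this strategy is used --- take the black-square path to the border supplied by Lemma~\ref{lemma:Steinhaus}, thread the arc through interiors of black level-$n$ squares, and bridge diagonal contacts via Property~\ref{prop3} applied to $\mathcal{A}_{n+1}$. Your handling of the corner case (observing that the two ``other'' level-$(n+1)$ corner cells sit at diagonally opposite corners of the \emph{same} pattern $\mathcal{A}_{n+1}$, so Property~\ref{prop3} forces one of them black) is exactly the point where the mixed setting differs only cosmetically from the self-similar one, and you have treated it cleanly.
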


\begin{corollary}\label{corollary:border} Let $\{{\cal A}_k\}_{k=1}^{\infty}$ be a sequence of labyrinth patterns,  $m_k\ge 3$, and $n\ge 1$. If $x$ is a point in $([0,1]\times [0,1]) \setminus L_{\infty}$, then there is an arc $a \subseteq ([0,1]\times [0,1]) \setminus L_{\infty}$ between $x$ and a point in $fr([0,1]\times [0,1])$.
\end{corollary}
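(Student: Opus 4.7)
The plan is to reduce the statement to Lemma~\ref{lemma:border} via the monotonicity of the sequence $\{L_n\}$. Recall that by construction $L_1 \supseteq L_2 \supseteq \cdots$ and $L_\infty = \bigcap_{n \ge 1} L_n$.

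First, given $x \in ([0,1]\times[0,1]) \setminus L_\infty$, I would observe that since $x \notin \bigcap_{n\ge 1} L_n$, there must exist some index $N \ge 1$ such that $x \notin L_N$. This is the only nontrivial choice in the argument: it turns the (possibly non-constructive) membership statement ``$x \notin L_\infty$'' into a concrete level at which $x$ is already outside a prefractal.

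Next, I would apply Lemma~\ref{lemma:border} with $n = N$. That lemma supplies an arc $a \subseteq ([0,1]\times[0,1]) \setminus L_{N+1}$ joining $x$ to a point of $fr([0,1]\times[0,1])$. Because $L_\infty \subseteq L_{N+1}$ (monotonicity of the sequence), we obtain
\[
([0,1]\times[0,1]) \setminus L_{N+1} \subseteq ([0,1]\times[0,1]) \setminus L_\infty,
\]
and hence $a \subseteq ([0,1]\times[0,1]) \setminus L_\infty$, which is precisely the arc required.

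There is no real obstacle here; the only ``content'' is the observation that a point outside the intersection of a decreasing sequence of closed sets is outside some finite level, after which Lemma~\ref{lemma:border} does all the work. This mirrors verbatim the self-similar case proof in \cite{laby_4x4}.
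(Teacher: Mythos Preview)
Your proposal is correct and matches the paper's approach: the paper simply states that the proof works exactly as in the self-similar case \cite{laby_4x4}, and the argument you give---pick $N$ with $x\notin L_N$, apply Lemma~\ref{lemma:border}, and use $L_\infty\subseteq L_{N+1}$---is precisely that proof.
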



We remind that a \emph{continuum} is a compact connected Hausdorff space, and a \emph{dendrite} is a locally connected continuum that contains no simple closed curve. 
\begin{theorem}\label{theorem:dendrite} Let $\{{\cal A}_k\}_{k=1}^{\infty}$ be a sequence of labyrinth patterns,  
$m_k\ge 3$, for all $k \ge 1$. Then $L_{\infty}$ is a dendrite.
\end{theorem}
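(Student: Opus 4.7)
The plan is to use the classical characterization of dendrites as locally connected continua that contain no simple closed curve, and to verify each ingredient in turn. Compactness is immediate, since $L_\infty=\bigcap_{n\ge 1}L_n$ is a decreasing intersection of compact subsets of $[0,1]^2$. For connectedness, Lemma~\ref{lemma:Prop123} shows that every $\mathcal{W}_n$ is a labyrinth set, so by Property~\ref{prop1} the graph $\mathcal{G}(\mathcal{W}_n)$ is a connected tree; hence $L_n$, a union of closed squares whose side-adjacency graph is connected, is itself connected, and the decreasing intersection of compact connected subsets of the Hausdorff space $[0,1]^2$ is connected.

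Next I would rule out a simple closed curve using Corollary~\ref{corollary:border}. A preliminary observation is that $L_\infty$ has empty interior: if some open ball lay in $L_\infty$, then for $n$ large enough an entire $W\in\mathcal{W}_n$ would lie in $L_\infty$, forcing $W\subseteq L_{n+1}$ and hence $\mathcal{A}_{n+1}=\mathcal{S}_{m_{n+1}}$; but the graph of the full $m\times m$ pattern with $m\ge 2$ contains cycles, contradicting Property~\ref{prop1} for $\mathcal{A}_{n+1}$. Now suppose for contradiction that $C\subseteq L_\infty$ is a simple closed curve, and let $I,E$ be the bounded and unbounded components of $\mathbb{R}^2\setminus C$ given by the Jordan curve theorem. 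Since $C\subseteq[0,1]^2$ and the connected unbounded set $\mathbb{R}^2\setminus[0,1]^2$ is disjoint from $C$, we have $\mathbb{R}^2\setminus[0,1]^2\subseteq E$, so $fr([0,1]^2)\subseteq\overline{E}=E\cup C$. The empty-interior step supplies some $p\in I\setminus L_\infty$; Corollary~\ref{corollary:border} then yields an arc $a\subseteq[0,1]^2\setminus L_\infty$ joining $p$ to a point $q\in fr([0,1]^2)$. Since $a$ avoids $L_\infty\supseteq C$, we must have $q\in E$, so $a$ is a connected subset of $\mathbb{R}^2\setminus C$ meeting both $I$ and $E$, contradicting Jordan.

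Local connectedness I would handle by exploiting the recursive structure: for each $W\in\mathcal{W}_n$ the set $L_\infty\cap W$ is, up to the affine similarity $P_W$, the mixed labyrinth fractal associated with the shifted sequence $\{\mathcal{A}_{n+k}\}_{k\ge 1}$, and so is connected by the argument of the first paragraph. Given $x\in L_\infty$ and $\varepsilon>0$, choose $n$ with $2\sqrt{2}/m(n)<\varepsilon$, let $\mathcal{W}(x)\subseteq\mathcal{W}_n$ consist of the (at most four) white squares of level $n$ containing $x$, and set $U=\bigcup_{W\in\mathcal{W}(x)}(L_\infty\cap W)$. Each summand is connected and contains $x$, so $U$ is connected; its diameter is at most $2\sqrt{2}/m(n)<\varepsilon$; and since the remaining finitely many squares of $\mathcal{S}_{m(n)}$ sit at a positive distance from $x$, a sufficiently small ball $B(x,\delta)$ meets $L_\infty$ only inside $\bigcup_{W\in\mathcal{W}(x)}W$, so $B(x,\delta)\cap L_\infty\subseteq U$ and $U$ is a neighborhood of $x$ in $L_\infty$.

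The step that I expect to demand most care is local connectedness: it is the only point that genuinely uses the recursive sub-fractal structure, and it requires handling the cases where $x$ lies on an edge or a corner shared by up to four level-$n$ squares. The Jordan-curve step is conceptually clean once one has the auxiliary empty-interior observation, which is really the only new ingredient required beyond Corollary~\ref{corollary:border} and Lemma~\ref{lemma:Prop123}.
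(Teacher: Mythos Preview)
Your argument is correct and tracks the paper's proof closely for compactness, connectedness (decreasing intersection of the compact connected sets $L_n$), and the exclusion of simple closed curves (Jordan curve theorem plus Corollary~\ref{corollary:border}; the empty-interior observation you spell out is exactly what the paper uses tacitly). The one substantive difference is the local-connectedness step: you build small connected neighborhoods by hand, taking unions of up to four sub-fractals $L_\infty\cap W$ to cover the edge and corner cases, whereas the paper simply notes that for every $\varepsilon>0$ the set $L_\infty$ is a finite union of connected sets $L_\infty\cap W$, $W\in\mathcal{W}_n$, each of diameter less than $\varepsilon$, and then invokes the Hahn--Mazurkiewicz--Sierpi\'nski theorem. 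Both routes rest on the same key fact---that each $L_\infty\cap W$ is a scaled mixed labyrinth fractal and hence connected---but the paper's version is a single line once that theorem is quoted, while yours is self-contained and avoids the external reference.
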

\begin{proof} $L_{\infty}$ is the intersection of the compact connected sets $L_n$, $n\ge 1$, 
and thus it is connected. One can easily check that for any $\epsilon>0$, there is an 
$n\ge 1$, such that the diameter of $W_{n} \in {\cal W}_{n}$ is less than $\epsilon$ (e.g., by using the 
facts that $m(n)>2^n$ and that the diameter of any square in ${\cal W}_n$ is $\frac{\sqrt{2}}{m(n)}$). Thus, for any $\epsilon>0$, $L_{\infty}$ is the 
finite union of connected sets of diameter less than $\epsilon$, by the definition of ${\cal W}_{n}$ (in Equation \ref{eq:W_n}). The Hahn-Mazurkiewicz-Sierpi\'nski 
Theorem \cite[Theorem~2, p.256]{Kuratowski} yields that $L_{\infty}$ is locally 
connected. As in the self-similar case \cite{laby_4x4,laby_oigemoan} one can show, 
by using the Jordan Curve Theorem and Corollary~\ref{corollary:border} that $L_{\infty}$ does not contain any simple closed curve.

\end{proof}
{\bf Remark.} Between any pair of points $x\neq y$ in $L_{\infty}$ there is a unique arc \cite[Corollary~2, p. 301]{Kuratowski}. 
\section{Paths in mixed labyrinth sets and in mixed labyrinth fractals}\label{sec:Paths}

In this section all patterns in the sequence $({\cal A}_k)_{k\ge 1}$ used in the iterative construction 
of ${\cal W}_n$ are labyrinth patterns.

We call a path in $\mathcal{G}({\cal W}_{n})$ a $\A$\emph{-path} if it leads from the top to 
the bottom exit of $W_n$. 
The $\B,\C,\D,\E$, and $\F$\emph{-paths} lead from left to right, top to right, right to bottom, bottom to left, and left to 
top exits, respectively. We denote by $\A(n),\B(n),\C(n),\D(n),\E(n)$, and $\F(n)$ the 
length of the respective path in 
$\mathcal{G}({\cal W}_{n})$, for $n\ge 1,$ and by
 $\A_k,\B_k,\C_k,\D_k,\E_k$, and $\F_k$ the length of the respective path in $\mathcal{G}({\cal A}_k)$, 
for $k\ge 1$. By the length of such a path we mean the number of squares in the path.
Of course, for $n=k=1$ the two path lengths coincide, i.e., $\A(1)=\A_1, \dots, \F(1)=\F_1$. 

\begin{proposition}\label{prop:recursion}
 There exist non-negative $6 \times 6$-matrices $M_k$, $k = 1,2,\dots$, such that
 \begin{equation}\label{eq:matrix_k}
 \left(
 \begin{array}{l}
\A_k \\  
\B_k \\  
\C_k \\  
\D_k \\  
\E_k \\  
\F_k \\  
\end{array}
\right)
=M_k \cdot \left(
\begin{array}{l}
1 \\  
1 \\  
1 \\  
1 \\  
1 \\  
1 \\  
\end{array}\right),
\end{equation}
and for $M(n)=M_1 \cdot M_2 \cdot \dots \cdot M_n$, for all $n \ge 1$, the element in row 
$x$ and column $y$ of $M(n)$ is the number of $y$-squares in the $x$-path in $\mathcal{G}({\cal W}_n)$.
Furthermore,
\begin{equation} \label{eq:recursion}
 \left(
 \begin{array}{l}
\A(n) \\  
\B(n) \\  
\C(n) \\  
\D(n) \\  
\E(n) \\  
\F(n) \\  
\end{array}
 \right)
=M(n) \cdot \left(
\begin{array}{l}
1 \\  
1 \\  
1 \\  
1 \\  
1 \\  
1 \\  
\end{array}\right).
\end{equation}
 
\end{proposition}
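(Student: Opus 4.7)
The plan is to define $M_k$ entry-wise in terms of the \emph{roles} that squares play inside the paths of $\mathcal{G}({\cal A}_k)$. For each pair $(x,y)$ of path types in $\{\A,\B,\C,\D,\E,\F\}$, let $(M_k)_{x,y}$ be the number of squares in the $x$-path of $\mathcal{G}({\cal A}_k)$ whose two used sides (the two sides through which the path enters and leaves that square, with the outer boundary of the pattern playing that role for the two endpoint squares of the path) form the unordered pair corresponding to type $y$. Since the six types $\A,\B,\C,\D,\E,\F$ exhaust all $\binom{4}{2}=6$ unordered pairs of distinct sides, each square of the $x$-path is assigned exactly one role, and summing row $x$ of $M_k$ returns the length $x_k$. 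This gives equation~(\ref{eq:matrix_k}).

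For equation~(\ref{eq:recursion}), I would prove by induction on $n$ the stronger statement that $(M(n))_{x,y}$ counts the level-$n$ squares of role $y$ in the $x$-path of $\mathcal{G}({\cal W}_n)$; summing over $y$ then yields $x(n)$. The base case $n=1$ is the definition of $M_1$. The inductive step hinges on the following structural claim: if $p$ denotes the $x$-path in $\mathcal{G}({\cal W}_n)$ and $p'$ the $x$-path in $\mathcal{G}({\cal W}_{n-1})$, then $p$ visits each level-$(n-1)$ white square at most once, the ordered list of level-$(n-1)$ squares it visits is exactly $p'$, and the restriction of $p$ to any level-$(n-1)$ square $W$ of role $y'$ in $p'$ coincides with the $y'$-path in the scaled copy of ${\cal A}_n$ filling $W$.

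This structural claim is where the bulk of the work lies, and I expect it to be the main obstacle. Its ingredients are as follows. If $p$ re-entered some $W$, one would obtain two different paths in $\mathcal{G}({\cal W}_n)$ between the same two level-$n$ vertices in $W$: one staying inside $W$ (which exists because the scaled ${\cal A}_n$ in $W$ is a tree by Property~\ref{prop1}), and one detouring outside $W$. This contradicts the tree property of $\mathcal{G}({\cal W}_n)$ given by Lemma~\ref{lemma:Prop123}. Consequently the projection of $p$ to level $n-1$ is a path between the appropriate exit squares of $\mathcal{G}({\cal W}_{n-1})$, and the tree property of $\mathcal{G}({\cal W}_{n-1})$ forces this projection to be $p'$. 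Moreover, Property~\ref{prop2} applied to ${\cal A}_n$ supplies exactly one exit on each side, so the unique edge of $\mathcal{G}({\cal W}_n)$ between $W$ and an adjacent level-$(n-1)$ square runs through the corresponding exit square of ${\cal A}_n$ (uniqueness of this edge again uses Lemma~\ref{lemma:Prop123}). This pins down the two endpoints of the restriction of $p$ to $W$, and Property~\ref{prop1} applied to ${\cal A}_n$ then forces this restriction to equal precisely the $y'$-path.

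Granted the structural claim, the induction step is a direct counting identity:
\[
\#\{\text{role-}y\text{ squares in } p\}=\sum_{y'}(M(n-1))_{x,y'}\,(M_n)_{y',y}=(M(n))_{x,y},
\]
where the first factor of the sum counts, by the inductive hypothesis, level-$(n-1)$ squares of role $y'$ in $p'$, and the second factor counts, by the definition of $M_n$, role-$y$ level-$n$ squares contributed by each such $W$. This closes the induction and establishes equation~(\ref{eq:recursion}).
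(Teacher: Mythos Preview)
Your proposal is correct and follows essentially the same route as the paper: you define $(M_k)_{x,y}$ as the number of $y$-type squares in the $x$-path of $\mathcal{G}({\cal A}_k)$, then argue by induction that the $x$-path in $\mathcal{G}({\cal W}_n)$ is obtained by substituting, into each level-$(n-1)$ square of role $y'$ along the $x$-path of $\mathcal{G}({\cal W}_{n-1})$, the $y'$-path of ${\cal A}_n$. The paper presents this substitution constructively and simply asserts that ``matrix multiplication reflects the substitution of paths''; your version is in fact more explicit, since you spell out why the projection of the level-$n$ path to level $n-1$ must coincide with the level-$(n-1)$ path (using the tree property from Lemma~\ref{lemma:Prop123} and Properties~\ref{prop1}--\ref{prop2}), a point the paper leaves implicit.
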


\begin{proof} We explain how the path between all possible pairs of exits can be constructed. In order to show the 
idea of this construction, we start, e.g.,                                    
with a path between the right and the bottom exit, 
as shown in Figure~\ref{fig:W2_path}.
We note that the construction described below works for all mixed labyrinth 
fractals. 

First, we find the path between the right and the bottom exit of ${\cal W}_{1}$, 
(or, equivalently ${\cal A}_{1}$) shown in Figure \ref{fig:Squares of type $C$ and $D$.}). Then we denote each white square in the path 
according to its neighbours within the path: 
if it has a top and a bottom neighbour it is called $\A$-\emph{square}  
(with respect to the path), and
it is called $\B,\C,\D,\E$, and $\F$-\emph{square} if its neighbours are at left-right,
top-right, 
right-bottom, bottom-left, and left-top, respectively. 
If the white square is an exit, it is supposed to have a neighbour outside the 
side of the exit. A bottom exit, e.g., is supposed to have a neighbour below, outside the bottom, additionally to its inside neighbour.
 We repeat this procedure for all possible paths between two exits in $\mathcal{G}({\cal W}_{1})$, as shown in 
 Figure \ref{fig:Squares of type $A$ and $B$.}, \ref{fig:Squares of type $C$ and $D$.}, and \ref{fig:Squares of type $E$ and $F$.}.
\begin{figure}[hhhh]
\begin{center}
\includegraphics[scale=1]{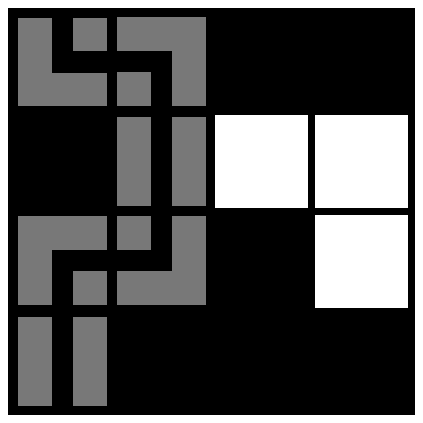}~~
\includegraphics[scale=1]{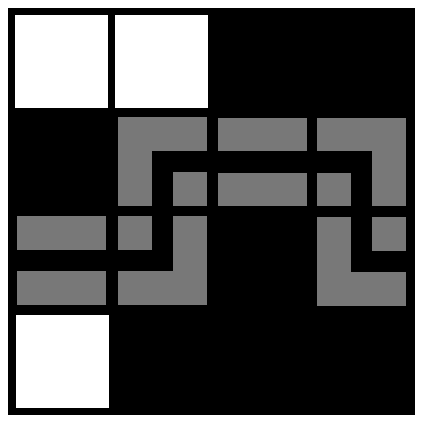}
\caption{Paths from top to bottom and from left to right exit of ${\cal A}_{1}$}\label{fig:Squares of type $A$ and $B$.}
\end{center}
\end{figure}
\begin{figure}[hhhh]
\begin{center}
\includegraphics[scale=1]{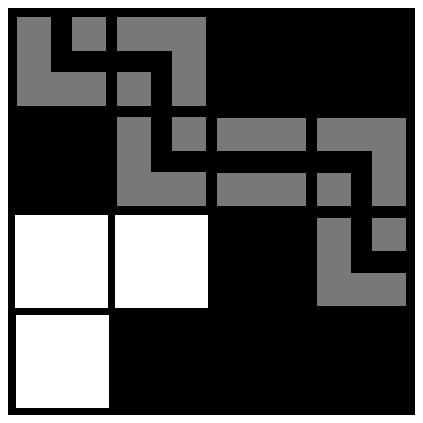}~~
\includegraphics[scale=1]{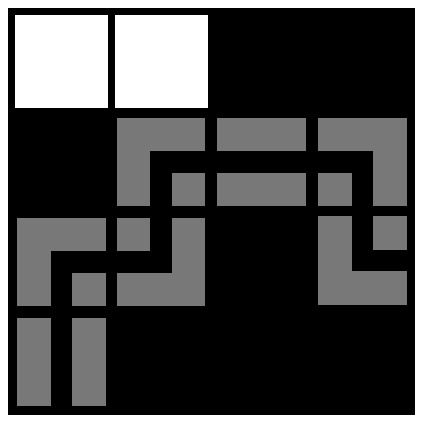}
\caption{Paths from top to right and from bottom to right exit of ${\cal A}_{1}$}\label{fig:Squares of type $C$ and $D$.}
\end{center}
\end{figure}
\begin{figure}[hhhh]
\begin{center}
\includegraphics[scale=1]{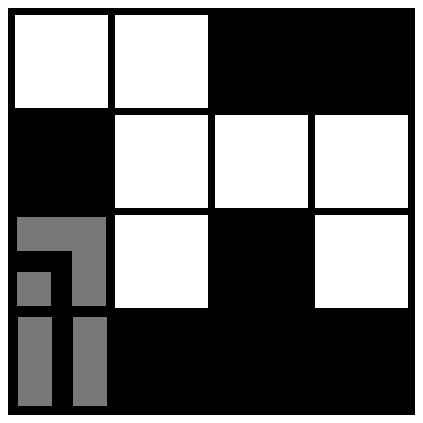}~~
\includegraphics[scale=1]{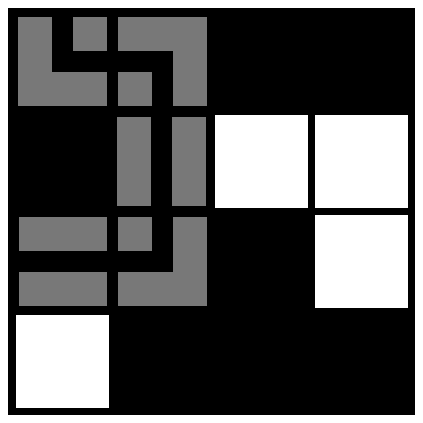}
\caption{Paths from left to bottom and top to left exit of ${\cal A}_{1}$}\label{fig:Squares of type $E$ and $F$.}
\end{center}
\end{figure}
In order to obtain the $\D$-path in $\mathcal{G}({\cal W}_{2})$, which is shown in Figure~\ref{fig:W2_path}, we replace each $\A$-square of 
the path in $\mathcal{G}({\cal W}_{1})$ with the $\A$-path in $\mathcal{G}({\mathcal A}_{2})$, which is shown in 
Figure~\ref{fig:A2_paths}.
Analogously, we do this for the other marked white squares, such that the path between 
the right and the bottom exit of ${\cal W}_{2}$ (shown in Figure~\ref{fig:W2_path}) arises. In general, for any pair of exits and $n\ge 1$, we replace each marked white 
square in the path of $\mathcal{G}({\cal W}_{n})$ with its corresponding path in $\mathcal{G}({\cal A}_{n+1})$ and obtain the path of 
$\mathcal{G}({\cal W}_{n+1})$. We define the matrix $M_k$, $k \ge 1$, that occurs in 
Equation~\ref{eq:matrix_k} in the following way: 
the columns of $M_k$ from left to right and the rows of $M_k$ from top to bottom correspond to $\A,\B,\C,\D,\E$, and $\F$, 
and the element in row 
$x$ and column $y$ of $M_k$ is the number of $y$-squares in the $x$-path in $\mathcal{G}({\cal A}_k)$. 
 One can easily check that the matrix multiplication reflects the substitution of paths. 
 The Equation \ref{eq:recursion} can then be shown by induction. 

\end{proof}

\begin{figure}[h!]
\begin{center}
\begin{tikzpicture}[scale=.41]

\draw[line width=2pt] (0,0) rectangle (10,10);

\filldraw[fill=gray!50, draw= black] (4,0) rectangle (6,2);
\filldraw[fill=gray!50, draw= black] (4,2) rectangle (6,4);
\filldraw[fill=gray!50, draw= black] (2,2) rectangle (4,4);
\filldraw[fill=gray!50, draw= black] (2,4) rectangle (4,6);
\filldraw[fill=gray!50, draw= black] (2,6) rectangle (4,8);
\filldraw[fill=gray!50, draw= black] (2,8) rectangle (4,10);
\filldraw[fill=gray!50, draw= black] (4,8) rectangle (6,10);
\draw[line width=6 pt, color=gray] (5, 0) -- (5, 3.15);
\draw[line width=6 pt, color=gray] (2.78,3) -- (5.25,3);
\draw[line width=6 pt, color=gray] (3, 2.82) -- (3, 9);
\draw[line width=6 pt, color=gray] (2.75, 9) -- (5.25, 9);
\draw[line width=6 pt, color=gray] (5, 9) -- (5, 10);
\draw[line width=2pt] (2, 0) -- (2,10);
\draw[line width=2pt] (4, 0) -- (4,10);
\draw[line width=2pt] (6, 0) -- (6,10);
\draw[line width=2pt] (8, 0) -- (8,10);
\draw[line width=2pt] (0, 2) -- (10,2);
\draw[line width=2pt] (0, 4) -- (10,4);
\draw[line width=2pt] (0, 6) -- (10,6);
\draw[line width=2pt] (0, 8) -- (10,8);
\filldraw[fill=black, draw=black] (2,0) rectangle (4, 2);
\filldraw[fill=black, draw=black] (6,0) rectangle (8, 2);
\filldraw[fill=black, draw=black] (8,0) rectangle (10, 2);
\filldraw[fill=black, draw=black] (8,2) rectangle (10, 4);
\filldraw[fill=black, draw=black] (0,4) rectangle (2, 6);
\filldraw[fill=black, draw=black] (4,4) rectangle (6, 6);
\filldraw[fill=black, draw=black] (4,6) rectangle (6, 8);
\filldraw[fill=black, draw=black] (6,6) rectangle (8, 8);
\filldraw[fill=black, draw=black] (0,8) rectangle (2, 10);
\filldraw[fill=black, draw=black] (6,8) rectangle (8, 10 );
\filldraw[fill=black, draw=black] (8,8) rectangle (10, 10);

\draw[line width=2pt] (11,0) rectangle (21,10);

\filldraw[fill=gray!50, draw= black] (11,6) rectangle (13,8);
\filldraw[fill=gray!50, draw= black] (13,6) rectangle (15,8);
\filldraw[fill=gray!50, draw= black] (13,4) rectangle (15,6);
\filldraw[fill=gray!50, draw= black] (13,2) rectangle (15,4);
\filldraw[fill=gray!50, draw= black] (15,2) rectangle (17,4);
\filldraw[fill=gray!50, draw= black] (17,2) rectangle (19,4);
\filldraw[fill=gray!50, draw= black] (17,4) rectangle (19,6);
\filldraw[fill=gray!50, draw= black] (19,4) rectangle (21,6);
\filldraw[fill=gray!50, draw= black] (19,6) rectangle (21,8);
\draw[line width=6 pt, color=gray] (11,7) -- (14.24,7 );
\draw[line width=6 pt, color=gray] (14, 2.82) -- (14, 7);
\draw[line width=6 pt, color=gray] (13.78,3) -- (18.35,3);
\draw[line width=6 pt, color=gray] (18.1,3 ) -- (18.1,5 );
\draw[line width=6 pt, color=gray] (17.85,5 ) -- (20.2,5 );
\draw[line width=6 pt, color=gray] (20,4.82 ) -- (20,7.18 );
\draw[line width=6 pt, color=gray] (19.8,7 ) -- (21,7 );
\draw[line width=2pt] (13, 0) -- (13,10);
\draw[line width=2pt] (15, 0) -- (15,10);
\draw[line width=2pt] (17, 0) -- (17,10);
\draw[line width=2pt] (19, 0) -- (19,10);
\draw[line width=2pt] (11, 2) -- (21,2);
\draw[line width=2pt] (11, 4) -- (21,4);
\draw[line width=2pt] (11, 6) -- (21,6);
\draw[line width=2pt] (11, 8) -- (21,8);
\filldraw[fill=black, draw=black] (13,0) rectangle (15, 2);
\filldraw[fill=black, draw=black] (17,0) rectangle (19, 2);
\filldraw[fill=black, draw=black] (19,0) rectangle (21, 2);
\filldraw[fill=black, draw=black] (19,2) rectangle (21, 4);
\filldraw[fill=black, draw=black] (11,4) rectangle (13, 6);
\filldraw[fill=black, draw=black] (15,4) rectangle (17, 6);
\filldraw[fill=black, draw=black] (15,6) rectangle (17, 8);
\filldraw[fill=black, draw=black] (17,6) rectangle (19, 8);
\filldraw[fill=black, draw=black] (11,8) rectangle (13, 10);
\filldraw[fill=black, draw=black] (17,8) rectangle (19, 10 );
\filldraw[fill=black, draw=black] (19,8) rectangle (21, 10);
\end{tikzpicture}
\caption{Paths from bottom to top and from left to right exit of $\mathcal{A}_2$}
\label{fig:A2_paths}
\end{center}
\end{figure}
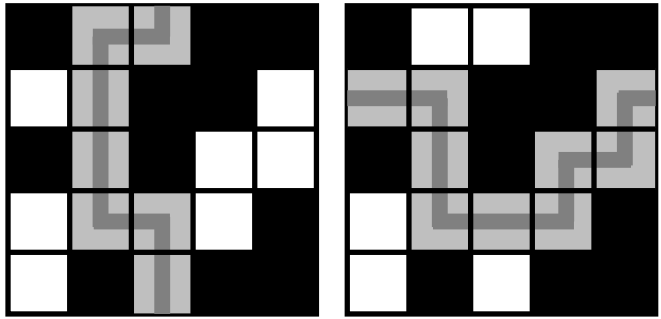

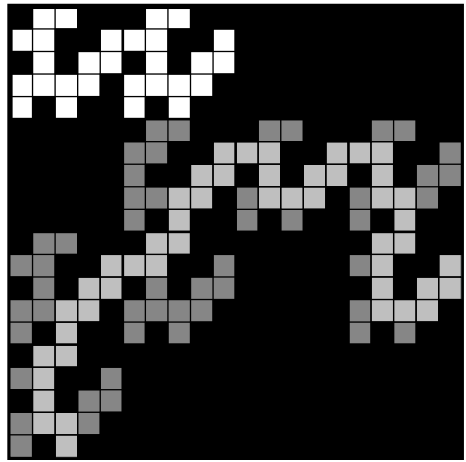
\begin{figure}[h!]
\begin{center}
\begin{tikzpicture}[scale=.3]
\draw[line width=2pt] (0,0) rectangle (20,20);
\filldraw[fill=gray!95, draw= black] (0,0) rectangle (5,5);
\filldraw[fill=gray!95, draw= black] (0,5) rectangle (5,10);
\filldraw[fill=gray!95, draw= black] (5,5) rectangle (10,10);
\filldraw[fill=gray!95, draw= black] (5,10) rectangle (10,15);
\filldraw[fill=gray!95, draw= black] (10,10) rectangle (15,15);
\filldraw[fill=gray!95, draw= black] (15,10) rectangle (20,15);
\filldraw[fill=gray!95, draw= black] (15,5) rectangle (20,10);
\filldraw[fill=gray!50, draw= black] (2,0) rectangle (3,1);
\filldraw[fill=gray!50, draw= black] (2,1) rectangle (3,2);
\filldraw[fill=gray!50, draw= black] (1,1) rectangle (2,2);
\filldraw[fill=gray!50, draw= black] (1,2) rectangle (2,3);
\filldraw[fill=gray!50, draw= black] (1,3) rectangle (2,4);
\filldraw[fill=gray!50, draw= black] (1,4) rectangle (2,5);
\filldraw[fill=gray!50, draw= black] (2,4) rectangle (3,5);
\filldraw[fill=gray!50, draw= black] (2,5) rectangle (3,6);
\filldraw[fill=gray!50, draw= black] (2,6) rectangle (3,7);
\filldraw[fill=gray!50, draw= black] (3,6) rectangle (4,7);
\filldraw[fill=gray!50, draw= black] (3,7) rectangle (4,8);
\filldraw[fill=gray!50, draw= black] (4,7) rectangle (5,8);
\filldraw[fill=gray!50, draw= black] (4,8) rectangle (5,9);
\filldraw[fill=gray!50, draw= black] (5,8) rectangle (6,9);
\filldraw[fill=gray!50, draw= black] (6,8) rectangle (7,9);
\filldraw[fill=gray!50, draw= black] (6,9) rectangle (7,10);
\filldraw[fill=gray!50, draw= black] (7,9) rectangle (8,10);
\filldraw[fill=gray!50, draw= black] (7,10) rectangle (8,11);
\filldraw[fill=gray!50, draw= black] (7,11) rectangle (8,12);
\filldraw[fill=gray!50, draw= black] (8,11) rectangle (9,12);
\filldraw[fill=gray!50, draw= black] (8,12) rectangle (9,13);
\filldraw[fill=gray!50, draw= black] (9,12) rectangle (10,13);
\filldraw[fill=gray!50, draw= black] (9,13) rectangle (10,14);
\filldraw[fill=gray!50, draw= black] (10,13) rectangle (11,14);
\filldraw[fill=gray!50, draw= black] (11,13) rectangle (12,14);
\filldraw[fill=gray!50, draw= black] (11,12) rectangle (12,13);
\filldraw[fill=gray!50, draw= black] (11,11) rectangle (12,12);
\filldraw[fill=gray!50, draw= black] (12,11) rectangle (13,12);
\filldraw[fill=gray!50, draw= black] (13,11) rectangle (14,12);
\filldraw[fill=gray!50, draw= black] (13,12) rectangle (14,13);
\filldraw[fill=gray!50, draw= black] (14,12) rectangle (15,13);
\filldraw[fill=gray!50, draw= black] (14,13) rectangle (15,14);
\filldraw[fill=gray!50, draw= black] (15,13) rectangle (16,14);
\filldraw[fill=gray!50, draw= black] (16,13) rectangle (17,14);
\filldraw[fill=gray!50, draw= black] (16,12) rectangle (17,13);
\filldraw[fill=gray!50, draw= black] (16,11) rectangle (17,12);
\filldraw[fill=gray!50, draw= black] (17,11) rectangle (18,12);
\filldraw[fill=gray!50, draw= black] (17,10) rectangle (18,11);
\filldraw[fill=gray!50, draw= black] (17,9) rectangle (18,10);
\filldraw[fill=gray!50, draw= black] (16,9) rectangle (17,10);
\filldraw[fill=gray!50, draw= black] (16,8) rectangle (17,9);
\filldraw[fill=gray!50, draw= black] (16,7) rectangle (17,8);
\filldraw[fill=gray!50, draw= black] (16,6) rectangle (17,7);
\filldraw[fill=gray!50, draw= black] (17,6) rectangle (18,7);
\filldraw[fill=gray!50, draw= black] (18,6) rectangle (19,7);
\filldraw[fill=gray!50, draw= black] (18,7) rectangle (19,8);
\filldraw[fill=gray!50, draw= black] (19,7) rectangle (20,8);
\filldraw[fill=gray!50, draw= black] (19,8) rectangle (20,9);

\draw[line width=1pt] (5, 0) -- (5,20);
\draw[line width=1pt] (10, 0) -- (10,20);
\draw[line width=1pt] (15, 0) -- (15,20);
\draw[line width=1pt] (0, 5) -- (20,5);
\draw[line width=1pt] (0, 10) -- (20,10);
\draw[line width=1pt] (0, 15) -- (20,15);
\draw[line width=0.5pt] (1, 0) -- (1,20);
\draw[line width=0.5pt] (2, 0) -- (2,20);
\draw[line width=0.5pt] (3, 0) -- (3,20);
\draw[line width=0.5pt] (4, 0) -- (4,20);
\draw[line width=0.5pt] (6, 0) -- (6,20);
\draw[line width=0.5pt] (7, 0) -- (7,20);
\draw[line width=0.5pt] (8, 0) -- (8,20);
\draw[line width=0.5pt] (9, 0) -- (9,20);
\draw[line width=0.5pt] (11, 0) -- (11,20);
\draw[line width=0.5pt] (12, 0) -- (12,20);
\draw[line width=0.5pt] (13, 0) -- (13,20);
\draw[line width=0.5pt] (14, 0) -- (14,20);
\draw[line width=0.5pt] (16, 0) -- (16,20);
\draw[line width=0.5pt] (17, 0) -- (17,20);
\draw[line width=0.5pt] (18, 0) -- (18,20);
\draw[line width=0.5pt] (19, 0) -- (19,20);
\draw[line width=0.5pt] (0, 1) -- (20,1);
\draw[line width=0.5pt] (0, 2) -- (20,2);
\draw[line width=0.5pt] (0, 3) -- (20,3);
\draw[line width=0.5pt] (0, 4) -- (20,4);
\draw[line width=0.5pt] (0, 6) -- (20,6);
\draw[line width=0.5pt] (0, 7) -- (20,7);
\draw[line width=0.5pt] (0, 8) -- (20,8);
\draw[line width=0.5pt] (0, 9) -- (20,9);
\draw[line width=0.5pt] (0, 11) -- (20,11);
\draw[line width=0.5pt] (0, 12) -- (20,12);
\draw[line width=0.5pt] (0, 13) -- (20,13);
\draw[line width=0.5pt] (0, 14) -- (20,14);
\draw[line width=0.5pt] (0, 16) -- (20,16);
\draw[line width=0.5pt] (0, 17) -- (20,17);
\draw[line width=0.5pt] (0, 18) -- (20,18);
\draw[line width=0.5pt] (0, 19) -- (20,19);
\filldraw[fill=black, draw=black] (5,0) rectangle (10, 5);
\filldraw[fill=black, draw=black] (10,0) rectangle (15, 5);
\filldraw[fill=black, draw=black] (15,0) rectangle (20, 5);
\filldraw[fill=black, draw=black] (10,5) rectangle (15, 10);
\filldraw[fill=black, draw=black] (0,10) rectangle (5, 15);
\filldraw[fill=black, draw=black] (10,15) rectangle (15, 20);
\filldraw[fill=black, draw=black] (15,15) rectangle (20, 20);

\filldraw[fill=black, draw=black] (1,0) rectangle (2, 1);
\filldraw[fill=black, draw=black] (3,0) rectangle (4, 1);
\filldraw[fill=black, draw=black] (4,0) rectangle (5, 1);
\filldraw[fill=black, draw=black] (4,1) rectangle (5, 2);
\filldraw[fill=black, draw=black] (0,2) rectangle (1, 3);
\filldraw[fill=black, draw=black] (2,2) rectangle (3, 3);
\filldraw[fill=black, draw=black] (2,3) rectangle (3, 4);
\filldraw[fill=black, draw=black] (3,3) rectangle (4, 4);
\filldraw[fill=black, draw=black] (0,4) rectangle (1, 5);
\filldraw[fill=black, draw=black] (3,4) rectangle (4, 5 );
\filldraw[fill=black, draw=black] (4,4) rectangle (5, 5);
\filldraw[fill=black, draw=black] (1,5) rectangle (2, 6);
\filldraw[fill=black, draw=black] (3,5) rectangle (4, 6);
\filldraw[fill=black, draw=black] (4,5) rectangle (5, 6);
\filldraw[fill=black, draw=black] (4,6) rectangle (5, 7);
\filldraw[fill=black, draw=black] (0,7) rectangle (1, 8);
\filldraw[fill=black, draw=black] (2,7) rectangle (3, 8);
\filldraw[fill=black, draw=black] (2,8) rectangle (3, 9);
\filldraw[fill=black, draw=black] (3,8) rectangle (4, 9);
\filldraw[fill=black, draw=black] (0,9) rectangle (1, 10);
\filldraw[fill=black, draw=black] (3,9) rectangle (4, 10);
\filldraw[fill=black, draw=black] (4,9) rectangle (5, 10);
\filldraw[fill=black, draw=black] (6,5) rectangle (7, 6);
\filldraw[fill=black, draw=black] (8,5) rectangle (9, 6);
\filldraw[fill=black, draw=black] (9,5) rectangle (10, 6);
\filldraw[fill=black, draw=black] (9,6) rectangle (10, 7);
\filldraw[fill=black, draw=black] (5,7) rectangle (6, 8);
\filldraw[fill=black, draw=black] (7,7) rectangle (8, 8);
\filldraw[fill=black, draw=black] (7,8) rectangle (8, 9);
\filldraw[fill=black, draw=black] (8,8) rectangle (9, 9);
\filldraw[fill=black, draw=black] (5,9) rectangle (6, 10);
\filldraw[fill=black, draw=black] (8,9) rectangle (9, 10);
\filldraw[fill=black, draw=black] (9,9) rectangle (10, 10);
\filldraw[fill=black, draw=black] (6,10) rectangle (7, 11);
\filldraw[fill=black, draw=black] (8,10) rectangle (9, 11);
\filldraw[fill=black, draw=black] (9,10) rectangle (10, 11);
\filldraw[fill=black, draw=black] (9,11) rectangle (10, 12);
\filldraw[fill=black, draw=black] (7,12) rectangle (6, 13);
\filldraw[fill=black, draw=black] (7,12) rectangle (8, 13);
\filldraw[fill=black, draw=black] (7,13) rectangle (8, 14);
\filldraw[fill=black, draw=black] (8,13) rectangle (9, 14);
\filldraw[fill=black, draw=black] (5,14) rectangle (6, 15);
\filldraw[fill=black, draw=black] (8,14) rectangle (9, 15 );
\filldraw[fill=black, draw=black] (9,14) rectangle (10, 15);
\filldraw[fill=black, draw=black] (11,10) rectangle (12, 11);
\filldraw[fill=black, draw=black] (13,10) rectangle (14, 11);
\filldraw[fill=black, draw=black] (14,10) rectangle (15, 11);
\filldraw[fill=black, draw=black] (14,11) rectangle (15, 12);
\filldraw[fill=black, draw=black] (10,12) rectangle (11, 13);
\filldraw[fill=black, draw=black] (12,12) rectangle (13, 13);
\filldraw[fill=black, draw=black] (12,13) rectangle (13, 14);
\filldraw[fill=black, draw=black] (13,13) rectangle (14, 14);
\filldraw[fill=black, draw=black] (10,14) rectangle (11, 15);
\filldraw[fill=black, draw=black] (13,14) rectangle (14, 15 );
\filldraw[fill=black, draw=black] (14,14) rectangle (15, 15);
\filldraw[fill=black, draw=black] (16,5) rectangle (17, 6);
\filldraw[fill=black, draw=black] (18,5) rectangle (19, 6);
\filldraw[fill=black, draw=black] (19,5) rectangle (20, 6);
\filldraw[fill=black, draw=black] (19,6) rectangle (20, 7);
\filldraw[fill=black, draw=black] (15,7) rectangle (16, 8);
\filldraw[fill=black, draw=black] (17,7) rectangle (18, 8);
\filldraw[fill=black, draw=black] (17,8) rectangle (18, 9);
\filldraw[fill=black, draw=black] (18,8) rectangle (19, 9);
\filldraw[fill=black, draw=black] (15,9) rectangle (16, 10);
\filldraw[fill=black, draw=black] (18,9) rectangle (19, 10 );
\filldraw[fill=black, draw=black] (19,9) rectangle (20, 10);
\filldraw[fill=black, draw=black] (16,10) rectangle (17, 11);
\filldraw[fill=black, draw=black] (18,10) rectangle (19, 11);
\filldraw[fill=black, draw=black] (19,10) rectangle (20, 11);
\filldraw[fill=black, draw=black] (19,11) rectangle (20, 12);
\filldraw[fill=black, draw=black] (15,12) rectangle (16, 13);
\filldraw[fill=black, draw=black] (17,12) rectangle (18, 13);
\filldraw[fill=black, draw=black] (17,13) rectangle (18, 14);
\filldraw[fill=black, draw=black] (18,13) rectangle (19, 14);
\filldraw[fill=black, draw=black] (15,14) rectangle (16, 15);
\filldraw[fill=black, draw=black] (18,14) rectangle (19, 15);
\filldraw[fill=black, draw=black] (19,14) rectangle (20, 15);
\filldraw[fill=black, draw=black] (1,15) rectangle (2, 16);
\filldraw[fill=black, draw=black] (3,15) rectangle (4, 16);
\filldraw[fill=black, draw=black] (4,15) rectangle (5, 16);
\filldraw[fill=black, draw=black] (4,16) rectangle (5, 17);
\filldraw[fill=black, draw=black] (0,17) rectangle (1, 18);
\filldraw[fill=black, draw=black] (2,17) rectangle (3, 18);
\filldraw[fill=black, draw=black] (2,18) rectangle (3, 19);
\filldraw[fill=black, draw=black] (3,18) rectangle (4, 19);
\filldraw[fill=black, draw=black] (0,19) rectangle (1, 20);
\filldraw[fill=black, draw=black] (3,19) rectangle (4, 20);
\filldraw[fill=black, draw=black] (4,19) rectangle (5, 20);
\filldraw[fill=black, draw=black] (6,15) rectangle (7, 16);
\filldraw[fill=black, draw=black] (8,15) rectangle (9, 16);
\filldraw[fill=black, draw=black] (9,15) rectangle (10, 16);
\filldraw[fill=black, draw=black] (9,16) rectangle (10, 17);
\filldraw[fill=black, draw=black] (5,17) rectangle (6, 18);
\filldraw[fill=black, draw=black] (7,17) rectangle (8, 18);
\filldraw[fill=black, draw=black] (7,18) rectangle (8, 19);
\filldraw[fill=black, draw=black] (8,18) rectangle (9, 19);
\filldraw[fill=black, draw=black] (5,19) rectangle (6, 20);
\filldraw[fill=black, draw=black] (8,19) rectangle (9, 20);
\filldraw[fill=black, draw=black] (9,19) rectangle (10, 20);

\end{tikzpicture}
\caption{The set  $\mathcal{W}_2$ constructed with the patterns 
$\mathcal{A}_1$ and $\mathcal{A}_2$ shown in Figure \ref{fig:A1A2}, and the path from the bottom 
to the right exit of $ \mathcal{W}_2$ (in lighter gray)} 
\label{fig:W2_path}
\end{center}
\end{figure}

We note that in the above example, 
\[
M_1=\left(
\begin{array}{llllll}
2 & 0 & 1 & 1 & 1 & 1\\
0 & 2 & 1 & 1 & 1 & 1\\
0 & 1 & 3 & 0 & 2 & 0\\
1 & 1 & 1 & 2 & 1 & 1\\
1 & 0 & 0 & 0 & 1 & 0\\
1 & 1 & 1 & 0 & 1 & 1\\                                                                                         
\end{array}\right),~~\mbox{and}~~ 
M_2=\left(
\begin{array}{llllll}
3 & 0 & 1 & 1 & 1 & 1\\
1 & 2 & 1 & 2 & 1 & 2\\
2 & 1 & 1 & 3 & 0 & 3\\
1 & 0 & 0 & 3 & 0 & 2\\
2 & 1 & 1 & 0 & 2 & 0\\
0 & 1 & 0 & 1 & 0 & 2\\
\end{array}\right).
\]

For the above matrices we obtain $M_1\cdot M_2 =\left(
\begin{array}{llllll}
11 & 3 & 4 &  ~9 & 4 &  ~9\\
 ~7 & 7 & 4 & 11 & 4 & 11\\
11 & 7 & 6 & 11 & 5 & 11\\
10 & 5 & 4 & 13 & 4 & 12\\
 ~5 & 1 & 2 &  ~1 & 3 &  ~1\\
 ~8 & 5 & 4 &  ~7 & 4 &  ~8\\
\end{array}
\right),
$
and one can check (see also Figure~\ref{fig:W2_path}) that for this matrix the element in row $x$ and column $y$ is the number of $y$-squares in the $x$-path in 
$\mathcal{G}({\cal W}_{2})$.

We call the matrix $M_k$ in Proposition~\ref{prop:recursion} \emph{the path matrix of the labyrinth pattern} ${\cal A}_{k}$, $k=1,2,\dots$, 
and $M(n)$ the \emph{the path matrix of the (mixed) labyrinth set} ${\cal W}_{n}$, for $n=1,2,\dots$. 
\par For $n\ge 1$ and $W_1,W_2\in 
{\mathcal V}(\mathcal{G}({\cal W}_{n}))$, let $p_n(W_1,W_2)$ be the path in $\mathcal{G}({\cal W}_{n})$ from $W_1$ to $W_2$. 
Lemma~\ref{lemma:Construction} can be proven, as in the self-similar case \cite[Lemma~6]{laby_4x4} by using a theorem from the book of 
Kuratowski \cite[Theorem~3, par. 47, V, p. 181]{Kuratowski}.

\begin{lemma}\label{lemma:Construction}(Arc Construction) Let $a,b\in L_{\infty}$, where $a\neq b$. For all $n \ge 1$, there are $W_n(a),W_n(b)\in {\mathcal V}(\mathcal{G}({\cal W}_{n}))$ such that 
\begin{itemize}
\item[(a)]$W_1(a)\supseteq W_2(a)\supseteq\ldots$,
\item[(b)]$W_1(b)\supseteq W_2(b)\supseteq\ldots$,
\item[(c)]$\{a\}=\bigcap_{n=1}^{\infty}W_n(a)$,
\item[(d)]$\{b\}=\bigcap_{n=1}^{\infty}W_n(b)$.
\item[(e)]The set $\bigcap_{n=1}^{\infty}\left(\bigcup_{W\in p_n(W_n(a),W_n(b))} W\right)$ is an arc between $a$ and $b$. 
\end{itemize}
\end{lemma}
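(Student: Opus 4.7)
The argument follows the self-similar case \cite[Lemma~6]{laby_4x4} and splits into three stages.

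First, I would build the nested sequences $\{W_n(a)\}_n$ and $\{W_n(b)\}_n$ inductively. Since $a\in L_\infty\subseteq L_1$, some $W_1(a)\in\mathcal{W}_1$ contains $a$; pick one. Given $W_n(a)\ni a$, the subdivision of $W_n(a)$ by $\mathcal{A}_{n+1}$ partitions it into level-$(n+1)$ squares, and $a\in L_{n+1}\cap W_n(a)$ forces $a$ to lie in some white square of $\mathcal{W}_{n+1}$ sitting inside $W_n(a)$; designate one such square as $W_{n+1}(a)$. This yields (a), and (b) is identical for $b$. The diameter of $W_n(a)$ equals $\sqrt{2}/m(n)$ and $m(n)\ge 3^n\to\infty$, so (c) and (d) follow from the nestedness combined with the shrinking diameters.

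Second, since $a\ne b$, pick $N$ with $W_n(a)\ne W_n(b)$ for all $n\ge N$. By Lemma~\ref{lemma:Prop123} the graph $\mathcal{G}(\mathcal{W}_n)$ is a tree, so the path $p_n:=p_n(W_n(a),W_n(b))$ is uniquely defined; set $P_n=\bigcup_{W\in p_n}W$, a compact connected finite union of closed squares. The crucial step is the nesting $P_{n+1}\subseteq P_n$, which I would prove using the substitution structure underlying Proposition~\ref{prop:recursion}. Writing $p_n=V_0,V_1,\ldots,V_k$ with $V_0=W_n(a)$ and $V_k=W_n(b)$, each $V_i$ is subdivided by the same pattern $\mathcal{A}_{n+1}$; Property~\ref{prop2} then guarantees a unique pair of adjacent white level-$(n+1)$ squares straddling the side shared by $V_i$ and $V_{i+1}$, so any path from $W_{n+1}(a)$ to $W_{n+1}(b)$ in $\mathcal{G}(\mathcal{W}_{n+1})$ is forced to enter and leave each intermediate $V_i$ through prescribed boundary squares. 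Since $\mathcal{G}(\mathcal{A}_{n+1})$ is a tree, the sub-path inside each $V_i$ between the prescribed entry and exit squares (with endpoints replaced by $W_{n+1}(a)$ at $i=0$ and by $W_{n+1}(b)$ at $i=k$) is uniquely determined. Concatenating these sub-paths produces a simple path in $\mathcal{G}(\mathcal{W}_{n+1})$ from $W_{n+1}(a)$ to $W_{n+1}(b)$, and by tree-uniqueness it must coincide with $p_{n+1}$. Hence every square of $p_{n+1}$ sits inside some $V_i\in p_n$, i.e., $P_{n+1}\subseteq P_n$.

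Third, the set $A=\bigcap_{n\ge N}P_n$ is a nested intersection of nonempty continua, hence a nonempty continuum containing $a$ and $b$, and $A\subseteq\bigcap_n L_n=L_\infty$. Since $L_\infty$ is a dendrite by Theorem~\ref{theorem:dendrite}, $A$ is a subcontinuum of a dendrite and is therefore itself a dendrite. The widths $\sqrt{2}/m(n)$ of the squares composing $P_n$ tend to $0$, so $\{P_n\}$ converges in the Hausdorff metric to a one-dimensional continuum, and the Kuratowski result \cite[Theorem~3, par.~47, V, p.~181]{Kuratowski} invoked in the self-similar case identifies $A$ with an arc between $a$ and $b$, establishing (e).

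The main technical obstacle is the nesting step in Stage~2: one must argue carefully that consecutive squares $V_i,V_{i+1}\in p_n$ share a well-defined pair of adjacent white level-$(n+1)$ squares across their common side, so that the sub-paths concatenate into a genuine simple path. This is exactly where the fact that $V_i$ and $V_{i+1}$ are subdivided by the \emph{same} pattern $\mathcal{A}_{n+1}$, together with Property~\ref{prop2}, is essential. A secondary subtlety is the appeal to Kuratowski's theorem, where one should record explicitly that the diameters of the approximating squares shrink (immediate from $m(n)\to\infty$) to upgrade the nested intersection of two-dimensional continua to a genuine arc.
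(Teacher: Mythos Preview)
Your proposal is correct and follows exactly the route the paper indicates: it mirrors the self-similar argument of \cite[Lemma~6]{laby_4x4}, establishing the nested chains $P_{n+1}\subseteq P_n$ via the tree structure of $\mathcal{G}(\mathcal{W}_n)$ and the substitution by $\mathcal{A}_{n+1}$, and then invoking Kuratowski's theorem \cite[Theorem~3, par.~47, V, p.~181]{Kuratowski} to identify the intersection as an arc. The appeal to Theorem~\ref{theorem:dendrite} in Stage~3 is harmless but unnecessary, since Kuratowski's chain characterisation of arcs already does the work once you know the squares in $p_n$ form a chain of continua from $W_n(a)$ to $W_n(b)$ with diameters tending to zero.
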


Let $T_n\in {\cal W}_{n}$ be the top exit of ${\cal W}_{n}$, for $n\ge 1$. The \emph{top exit of} 
$L_{\infty}$ is $\bigcap_{n=1}^{\infty}T_n$. The other exits of $L_{\infty}$ are defined analogously. We note that 
Property~\ref{prop2} yields that $(x,1),(x,0)\in L_{\infty}$ if and only if $(x,1)$ is the top exit of $L_{\infty}$ and $(x,0)$ 
is the bottom exit of $L_{\infty}$. For the left and the right exits the analogue statement holds. 

The proof of the following proposition is analogous to that in the self-similar case \cite[Lemma~7]{laby_4x4}, taking into account that 
in the case of mixed labyrinth fractals the edgelength of a square of level $n$ is $\frac{1}{m(n)}$.
For the definitions of the parametrisation of a curve and its length we refer, e.g., to the mentioned paper \cite{laby_4x4}.

\begin{proposition}\label{lemma:m^n} Let $n,k\ge 1$, $\{W_1,\ldots,W_k\}$ be a (shortest) path between the exits $W_1$ and $W_k$ in 
$\mathcal{G}({\cal W}_{n})$,  $K_0=W_1 \cap fr([0,1]\times[0,1])$, $K_k=W_k \cap fr([0,1]\times[0,1])$, and $c$ be a curve in $L_n$ 
from a point of $K_0$ to a point of $K_k$. The length of any parametrisation of $c$ is at least $(k-1)/(2\cdot m(n))$.
\end{proposition}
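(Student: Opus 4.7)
My plan is to mirror the argument of the self-similar case \cite[Lemma~7]{laby_4x4}, keeping track of the fact that the edge length of squares in ${\cal W}_n$ is $1/m(n)$. The foundational structural fact is that by Lemma~\ref{lemma:Prop123}, $\mathcal{G}({\cal W}_n)$ is a tree, so the path $\{W_1,\ldots,W_k\}$ connecting the two exit squares is the unique path between them in this graph; in particular, any sub-walk of squares starting at $W_1$ and ending at $W_k$ must contain each $W_i$.

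First, I would parametrise $c$ by $\gamma:[0,1]\to L_n$ and produce points $q_0\in K_0$, $q_k\in K_k$, and $q_i\in s_i$ for $1\le i\le k-1$, where $s_i\subset W_i\cap W_{i+1}$ denotes the shared side, with $q_0,q_1,\ldots,q_k$ occurring along $c$ in this order. Their existence is a consequence of the tree property: removing the edge $\{W_i,W_{i+1}\}$ from $\mathcal{G}({\cal W}_n)$ disconnects it into two components, one containing $W_1$ and the other containing $W_k$, so $c$ must touch $s_i$ at some point (even a corner jump of $c$ through a grid vertex still meets $s_i$ at that vertex, since the vertex lies on $s_i$). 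The length of $c$ is then at least $\sum_{i=1}^{k}|q_i-q_{i-1}|$, each term being a lower bound for the length of the sub-arc of $c$ inside $W_i$.

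The remaining task is a geometric lower bound on this sum, exactly as in the self-similar case but with $1/m(n)$ in place of $1/m^n$. Each side of $W_i$ has length $1/m(n)$, and a case analysis on the relative position of $q_{i-1}$ and $q_i$ (which may lie on adjacent, opposite, or, for $i=1$ and $i=k$, border sides of $W_i$) yields that $\sum_{i=1}^{k}|q_i-q_{i-1}|\ge (k-1)/(2\cdot m(n))$. The main obstacle is that a passage between adjacent sides of a single $W_i$ may contribute almost nothing, but in that case the entry and exit points are pinned near a common corner of $W_i$, which forces the neighbouring passage through $W_{i-1}$ or $W_{i+1}$ to contribute at least $1/m(n)$ by the corresponding geometric constraint; an amortisation argument then produces the average of $1/(2\cdot m(n))$ per transition. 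Subtleties arising from back-and-forth excursions of $c$ are handled, as in \cite[Lemma~7]{laby_4x4}, by choosing the $q_i$ on a sub-arc of $c$ that visits the $W_i$ in the correct order along the unique tree path, which suffices to lower-bound the length of the full curve.
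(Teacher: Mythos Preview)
Your proposal is correct and follows exactly the approach indicated by the paper: the paper's ``proof'' is a one-line reference to \cite[Lemma~7]{laby_4x4} with the single modification that the edge length of a level-$n$ square is $1/m(n)$ rather than $1/m^n$, and this is precisely what you reconstruct. Your outline of the tree-separation argument for the existence of the $q_i$ and the amortised geometric estimate is a faithful expansion of that reference.
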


Let $n\ge 1$, $W\in {\cal W}_{n}$, and $t$ be the intersection of $L_{\infty}$ with the top edge of $W$. 
Then we call $t$ the \emph{top exit} of $W$. Analogously we define the \emph{bottom exit}, the\emph{ left exit} and
the \emph{right exit} of $W$.
 We note that the uniqueness of each of these four exits is provided by the uniqueness of the four exits of a 
mixed labyrinth fractal and by the fact that each 
such set of the form 
$L_{\infty} \cap W$, where $W\in {\cal W}_{n}$, is a mixed labyrinth fractal scaled by the factor $m(n)$.
We note that we have now defined exits for 
three different types of objects, i.e., for ${\cal W}_{n}$ (and ${\cal A}_{k}$), for $L_{\infty}$, 
and for squares in ${\cal W}_{n}$.
\begin{proposition}\label{lemma:ArcSimilarity1} Let $e_1,e_2$ be two exits in $L_{\infty}$, and $W_n(e_1), 
W_n(e_2)$ be the exits in $\mathcal{G}({\cal W}_{n})$ of the same type as $e_1$ and $e_2$, respectively, 
for some $n\ge 1.$
If $a$ is the arc that connects $e_1$ and $e_2$ in $L_{\infty}$, $p$ is the path in 
$\mathcal{G}({\cal W}_{n})$ from $W_n(e_1)$ to $W_n(e_2)$, and $W\in {\cal W}_{n}$ is a $\A$-square 
with respect to $p$, then $W\cap a$ is an arc in $L_{\infty}$ between the top and the bottom exit of $W$. 
If $W$ is an other type of square, the corresponding analogue statement 
holds.
\end{proposition}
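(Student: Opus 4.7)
My plan is to combine Lemma~\ref{lemma:Construction} (Arc Construction) with the substitution mechanism that underlies Proposition~\ref{prop:recursion}, using the tree property of $\mathcal{G}(\mathcal{W}_n)$ to pin down the portion of $a$ inside $W$.

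First I would give a canonical presentation of the arc $a$. Since $e_1$ and $e_2$ are exits of $L_{\infty}$, for each $k \ge 1$ I set $W_k(e_i)$ to be the unique exit of $\mathcal{W}_k$ of the same type as $e_i$; uniqueness comes from Property~\ref{prop2} applied to $\mathcal{W}_k$ via Lemma~\ref{lemma:Prop123}. The sequences $\{W_k(e_i)\}_{k\ge 1}$ are decreasing and, by the definition of the exits of $L_\infty$, intersect to $\{e_i\}$, so they satisfy the hypotheses of Lemma~\ref{lemma:Construction}; at level $n$ they reproduce the squares fixed in the statement. Denoting $p_k = p_k(W_k(e_1), W_k(e_2))$, we get $p_n = p$ and
\[
a \;=\; \bigcap_{k=1}^{\infty} \; \bigcup_{W' \in p_k} W'.
\]

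Next I would analyse how $p_k$ meets $W$ for $k \ge n$. The substitution rule exhibited in the proof of Proposition~\ref{prop:recursion} says that $p_{k+1}$ arises from $p_k$ by replacing every type-$X$ square by the type-$X$ path in $\mathcal{G}(\mathcal{A}_{k+1})$, positioned via the affine map $P$. Since $W$ is an $A$-square of $p_n = p$, an induction on $k \ge n$ shows that the subsequence of $p_k$ consisting of the squares contained in $W$ is precisely the $A$-path of the scaled labyrinth set that refines $W$ using $\mathcal{A}_{n+1},\dots,\mathcal{A}_k$; its first and last squares are, respectively, the top-exit and bottom-exit subsquares of $W$ at level $k$. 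Now $L_\infty \cap W$, after the affine rescaling carrying $W$ onto $[0,1]^2$, is itself a mixed labyrinth fractal built from the tail sequence $(\mathcal{A}_k)_{k>n}$, whose top and bottom exits coincide with the top and bottom exits of $W$ defined just before the proposition. Applying Lemma~\ref{lemma:Construction} inside this rescaled fractal to those two exits yields that
\[
\bigcap_{k \ge n} \; \bigcup_{\substack{W' \in p_k \\ W' \subseteq W}} W'
\]
is an arc between the top and bottom exits of $W$.

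The remaining, and principal, obstacle is the set identity
\[
W \cap a \;=\; \bigcap_{k \ge n} \; \bigcup_{\substack{W' \in p_k \\ W' \subseteq W}} W'.
\]
The danger is that points of $a$ lying on the boundary of $W$ could arrive from squares $W' \in p_k$ not contained in $W$, contributing spurious pieces to $W \cap a$. To rule this out I would use that $W$ appears exactly once in the simple path $p_n$ (simplicity comes from Property~\ref{prop1}, which makes $\mathcal{G}(\mathcal{W}_n)$ a tree), so the predecessor of $W$ in $p$ is the square above it and the successor is the square below it. By the substitution analysis, at every level $k \ge n$ the squares of $p_k$ outside $W$ that touch $W$ meet it only along its top or bottom edge, and these contact sets shrink to the top and bottom exits of $W$ in the intersection; but these two points already lie in the right-hand side above. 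The analogous statement for $B,C,D,E,F$-squares follows by the same argument after relabelling the two relevant sides.
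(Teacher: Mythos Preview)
Your proposal is correct and follows essentially the same route as the paper: the paper's proof is a one-line appeal to Lemma~\ref{lemma:Construction} (Arc Construction), by analogy with the self-similar case, and what you have done is spell out that analogy in detail---representing $a$ via the nested path-unions of Lemma~\ref{lemma:Construction}, using the substitution mechanism from Proposition~\ref{prop:recursion} to identify the portion of each $p_k$ inside $W$, and then invoking Lemma~\ref{lemma:Construction} once more in the rescaled copy $L_\infty\cap W$. Your explicit treatment of the boundary points of $W$ is a welcome addition that the paper leaves implicit; note that any corner contributions from diagonally adjacent squares of $p$ are harmless for the same reason you give for the side contributions, since a corner of $W$ lying in $a\subseteq L_\infty$ would have to be the (unique) exit on the corresponding edge of $W$.
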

\begin{proof}
 Analogously to the self-similar case, the statement follows from Lemma 
\ref{lemma:Construction}. 
\end{proof}



By the construction of mixed labyrinth fractals we obtain the following result.
\begin{proposition} \label{prop:exits_coordinates}Let $\{{\cal A}_k\}_{k=1}^{\infty}$ be a sequence of 
labyrinth patterns,  $m_k\ge 3$, and we set $m(0):=1$. 
 \begin{itemize}
 \item[(a)] 
Let $t_1, t_2$ and $b_1, b_2$ be the Cartesian coordinates of the top exit and bottom exit, respectively, 
in $L_{\infty}$, and $x_{1,t}^{k}, x_{2,t}^{k}$ the Cartesian coordinates of the left lower vertex of 
the square that is the top exit in ${{\cal A}_k}$, for all $k\ge 1$. Then
\[
t_1=b_1=\sum_{k=1}^{\infty}\frac{x_{1,t}^k}{m(k-1)}, ~~t_2=1, ~~b_2=0.
 \]
 \item[(b)] 
Let ${l_1}, l_2$ and $r_1, r_2$ be the Cartesian coordinates of the left exit and right exit, respectively, 
in $L_{\infty}$, and $x_{1,l}^{k}, x_{2,l}^{k}$ the Cartesian coordinates of the left lower vertex of 
the square that is the left exit in ${{\cal A}_k}$, for all $k \ge 1$. Then
\[
l_2=r_2=\sum_{k=1}^{\infty}\frac{x_{2,l}^k}{ m(k-1)}, ~~l_1=0, ~~r_1=1.
 \]
 \end{itemize}
\end{proposition}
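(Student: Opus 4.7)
My plan is to reduce the proposition to an explicit computation of the coordinates of the exit square of $\mathcal{W}_n$ at each finite level, together with a passage to the limit. I will focus on part (a) for the top exit; the bottom exit, and then part (b), will follow by symmetric arguments.

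First, I would show by induction on $n$ that the top exit square $T_n \in \mathcal{W}_n$ of level $n$ has its lower-left vertex at $\left(\sum_{k=1}^{n} x_{1,t}^k/m(k-1),\, 1 - 1/m(n)\right)$ and has side length $1/m(n)$. The base case $n=1$ is simply the definition of $x_{1,t}^1, x_{2,t}^1$ (with $x_{2,t}^1 = (m_1 - 1)/m_1$ since the top exit lies in the top row). For the inductive step, note that by Property~\ref{prop2} applied to $\mathcal{W}_n$ (which is a labyrinth set by Lemma~\ref{lemma:Prop123}), the top exit $T_{n+1}$ of $\mathcal{W}_{n+1}$ is obtained via the construction in Equation~\ref{eq:W_n} by applying $P_{T_n}$ to the top exit square of $\mathcal{A}_{n+1}$. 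Using the explicit formula for $P_Q$ with $q = 1/m(n)$, the $x$-coordinate of the lower-left vertex advances by exactly $x_{1,t}^{n+1}/m(n)$, and the $y$-coordinate becomes $1 - 1/m(n+1)$, completing the induction.

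Next, since $\{T_n\}_{n\ge1}$ is a decreasing sequence of closed squares with side lengths $1/m(n) \to 0$, the intersection $\bigcap_{n=1}^{\infty} T_n$, which is by definition the top exit of $L_\infty$, is a single point. Its coordinates are the limits of the lower-left vertices of $T_n$, yielding $t_1 = \sum_{k=1}^{\infty} x_{1,t}^k/m(k-1)$ and $t_2 = 1$. For the bottom exit, the key observation is that Property~\ref{prop2} forces the top and bottom exits of each $\mathcal{A}_k$ to sit in the same column, so $x_{1,b}^k = x_{1,t}^k$ for every $k$. Running the same inductive computation on the bottom exit squares $B_n$ (now in the bottom row, so the $y$-coordinate of the lower-left vertex is $0$ after each recursion involving the factor $1/m(k-1)$ times $x_{2,b}^k = 0$) gives $b_1 = t_1$ and $b_2 = 0$.

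Part (b) is handled in an entirely parallel manner, with rows and columns swapped: one uses the horizontal exit pair from Property~\ref{prop2} to conclude that the left and right exits of each $\mathcal{A}_k$ share the same row, performs the analogous inductive computation on the lower-left vertices of the left and right exit squares of $\mathcal{W}_n$, and passes to the limit. I do not expect any real obstacle; the only point that needs care is bookkeeping of the nested substitutions $P_{W_{n-1}}$, i.e., verifying that after $n$ compositions the overall map sends a pattern coordinate $x_{1,t}^{n}$ to the contribution $x_{1,t}^{n}/m(n-1)$ in the global coordinate system. This is straightforward from the form of $P_Q$ but is the one place where a slip could occur.
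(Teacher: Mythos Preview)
Your proposal is correct and follows exactly the route the paper indicates: the paper states the proposition as an immediate consequence of the construction of mixed labyrinth fractals and gives no further argument, while you have simply written out the natural inductive computation on the lower-left vertices of the exit squares $T_n$ that makes this explicit. There is nothing to add or correct.
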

{\bf Remark.} In the self-similar case it was shown \cite[Lemma 4]{laby_oigemoan} that 
for all $n \ge 1$, each exit $e$ in $L_{\infty}$ lies in exactly one square $W_n(e)\in{\cal W}_n$.

The following counterexample shows that the above statement does not hold in general in the case of mixed labyrinth fractals.
Let, e.g., ${\cal A}_1$ and ${\cal A}_2$ be as shown in Figure \ref{fig:A1A2} \ref{fig:counterexample_1}, and ${\cal A}_k={\cal A}_2$, 
for all $k\ge 3$. One can check (e.g., with Proposition \ref{prop:exits_coordinates}) that the left exit of $L_{\infty}$
is the midpoint of the left side of the unit square, i.e., the point $(0, \frac{1}{2})$, 
and that this exit lies in two squares of ${\cal W}_1,$  and in exactly one square of ${\cal W}_n,$ for all $n\ge 2.$

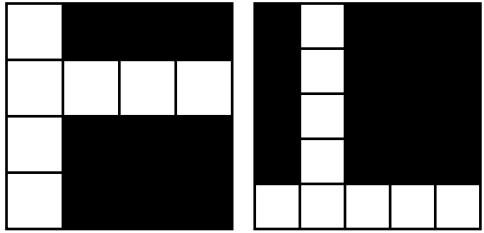
\begin{figure}[hhhh]
\begin{center}
\begin{tikzpicture}[scale=.30]
\draw[line width=1pt] (0,0) rectangle (10,10);
\draw[line width=1pt] (2.5, 0) -- (2.5,10);
\draw[line width=1pt] (5, 0) -- (5,10);
\draw[line width=1pt] (7.5, 0) -- (7.5,10);
\draw[line width=1pt] (0, 2.5) -- (10,2.5);
\draw[line width=1pt] (0, 5) -- (10,5);
\draw[line width=1pt] (0, 7.5) -- (10,7.5);
\filldraw[fill=black, draw=black] (2.5,0) rectangle (10, 5);
\filldraw[fill=black, draw=black] (2.5,7.5) rectangle (10, 10);
\draw[line width=1pt] (11,0) rectangle (21,10);
\draw[line width=1pt] (13, 0) -- (13,10);
\draw[line width=1pt] (15, 0) -- (15,10);
\draw[line width=1pt] (17, 0) -- (17,10);
\draw[line width=1pt] (19, 0) -- (19,10);
\draw[line width=1pt] (11, 2) -- (21,2);
\draw[line width=1pt] (11, 4) -- (21,4);
\draw[line width=1pt] (11, 6) -- (21,6);
\draw[line width=1pt] (11, 8) -- (21,8);
\filldraw[fill=black, draw=black] (11,2) rectangle (13, 10);
\filldraw[fill=black, draw=black] (15,2) rectangle (21, 10);
\end{tikzpicture}
\caption{Two labyrinth patterns, ${\cal A}_1$ (a $4$-pattern) and ${\cal A}_2$ (a $5$-pattern)}
\label{fig:counterexample_1}
\end{center}
\end{figure}

\begin{figure}[hhhh]
\begin{center}
\begin{tikzpicture}[scale=.30]
\draw[line width=1pt] (0,0) rectangle (10,10);
\draw[line width=1pt] (2.5, 0) -- (2.5,10);
\draw[line width=1pt] (5, 0) -- (5,10);
\draw[line width=1pt] (7.5, 0) -- (7.5,10);
\draw[line width=1pt] (0, 2.5) -- (10,2.5);
\draw[line width=1pt] (0, 5) -- (10,5);
\draw[line width=1pt] (0, 7.5) -- (10,7.5);
\filldraw[fill=black, draw=black] (2.5,0) rectangle (10, 5);
\filldraw[fill=black, draw=black] (2.5,7.5) rectangle (10, 10);
\draw[line width=1pt] (15,0) rectangle (25,10);
\draw[line width=1pt] (17.5, 0) -- (17.5,10);
\draw[line width=1pt] (20, 0) -- (20,10);
\draw[line width=1pt] (22.5, 0) -- (22.5,10);
\draw[line width=1pt] (15, 2.5) -- (25,2.5);
\draw[line width=1pt] (15, 5) -- (25,5);
\draw[line width=1pt] (15, 7.5) -- (25,7.5);
\filldraw[fill=black, draw=black] (17.5,0) rectangle (25, 2.5);
\filldraw[fill=black, draw=black] (17.5,2.5) rectangle (20, 5);
\filldraw[fill=black, draw=black] (22.5,5) rectangle (25, 7.5);
\filldraw[fill=black, draw=black] (17.5,7.5) rectangle (25, 10);
\draw[line width=1pt] (30,0) rectangle (40,10);
\draw[line width=1pt] (32, 0) -- (32,10);
\draw[line width=1pt] (34, 0) -- (34,10);
\draw[line width=1pt] (36, 0) -- (36,10);
\draw[line width=1pt] (38, 0) -- (38,10);
\draw[line width=1pt] (30, 2) -- (40,2);
\draw[line width=1pt] (30, 4) -- (40,4);
\draw[line width=1pt] (30, 6) -- (40,6);
\draw[line width=1pt] (30, 8) -- (40,8);
\filldraw[fill=black, draw=black] (30,2) rectangle (32, 10);
\filldraw[fill=black, draw=black] (34,0) rectangle (38, 2);
\filldraw[fill=black, draw=black] (34,4) rectangle (40, 10);

\end{tikzpicture}
\caption{Three labyrinth patterns,  ${\cal A}_1$, ${\cal A}_2$, and ${\cal A}_3$}
\label{fig:counterexample_2}
\end{center}
\end{figure}
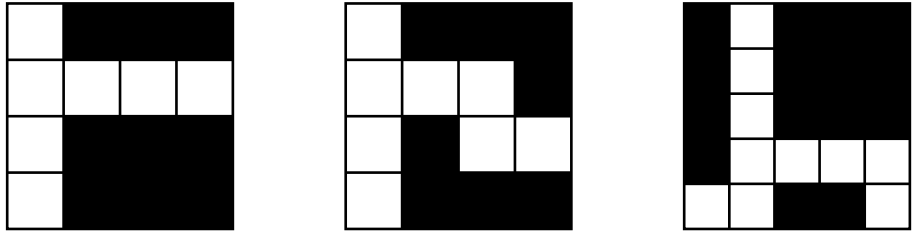

In Figure \ref{fig:counterexample_2} we show another counterexample. 
Let, e.g., ${\cal A}_1$, ${\cal A}_2$, and ${\cal A}_3$ be as shown in the figure, and ${\cal A}_k={\cal A}_3$, 
for all $k\ge 4$.
One can check that the left exit of $L_{\infty}$
is the point $(0, \frac{1}{2}+\frac{1}{16})$, 
and that this exit lies in exactly one square of ${\cal W}_1$, 
in two squares 
of ${\cal W}_2$, and in 
exactly one square of ${\cal W}_n$, for all $n\ge 3.$

\vspace{0.2cm}
For all $n \ge 1$, and $W \in {\cal W}_n$ let $L_{\infty}|W:= L_{\infty}\cap W$ and, 
for   $e \in \{t,b,l,r\}$, let $e(W)$ denote the top, bottom, left, or right exit of $W$, respectively.
\begin{proposition} With the above notations, we have: $L_{\infty}=\cup_{W \in {{\cal W}_n}}L_{\infty}|W$.
For all $n \ge 1$ and $W_1, W_2 \in {\cal W}_n$ there exists a translation $\phi$, 
such that $\phi (L_{\infty}|W_1)=L_{\infty}|W_2.$ Moreover, then we also have $\phi(e(W_1))=e(W_2)$, for $e\in\{t,b, l,r\}$.
\end{proposition}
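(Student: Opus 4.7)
The plan is to show that the portion of $L_\infty$ lying inside any $W\in{\cal W}_n$ is the $P_W$-image of another mixed labyrinth fractal, namely the one produced by the shifted pattern sequence $\{{\cal A}_{n+k}\}_{k\ge 1}$; once this ``factorisation of the construction inside $W$'' is established, the three assertions follow easily.

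The equality $L_\infty=\bigcup_{W\in{\cal W}_n}L_\infty|W$ is immediate from $L_\infty\subseteq L_n=\bigcup_{W\in{\cal W}_n}W$: every point of $L_\infty$ lies in some $W\in{\cal W}_n$, hence in $L_\infty\cap W=L_\infty|W$, while $L_\infty|W\subseteq L_\infty$ is trivial. Now set ${\cal A}'_k:={\cal A}_{n+k}$, and let ${\cal W}'_k$, $L'_k=\bigcup_{W''\in{\cal W}'_k}W''$ and $L'_\infty=\bigcap_{k\ge 1}L'_k$ be the objects produced by this shifted sequence. I would prove by induction on $k\ge 0$ that, for every $W\in{\cal W}_n$,
\[
\{W'\in{\cal W}_{n+k}\colon W'\subseteq W\}=\{P_W(W'')\colon W''\in{\cal W}'_k\}.
\]
The base case $k=0$ is trivial, and the inductive step is a direct application of~(\ref{eq:W_n}): passing from level $n+k$ to level $n+k+1$ inside $W$ amounts to replacing each white square contained in $W$ by a copy of ${\cal A}_{n+k+1}={\cal A}'_{k+1}$ via the maps $P_{(\cdot)}$, and the identity $P_{P_W(W'')}=P_W\circ P_{W''}$ matches this step with the corresponding one in the construction of $L'_\infty$. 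Taking unions over the squares and then intersections in $k$ yields $L_\infty\cap W=P_W(L'_\infty)$.

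For the translation claim, any two squares $W_1,W_2\in{\cal W}_n$ have the same side length $1/m(n)$, so the affine maps $P_{W_1}$ and $P_{W_2}$ differ only by the translation $\phi$ carrying the lower-left corner of $W_1$ to that of $W_2$; hence
\[
\phi(L_\infty|W_1)=\phi(P_{W_1}(L'_\infty))=P_{W_2}(L'_\infty)=L_\infty|W_2.
\]
For the exits, the top edge of $W_i$ is the $P_{W_i}$-image of the top edge of $[0,1]\times[0,1]$, and the uniqueness of the top exit of $L'_\infty$ (already established earlier for mixed labyrinth fractals) gives $t(W_i)=P_{W_i}(t(L'_\infty))$; combined with $\phi\circ P_{W_1}=P_{W_2}$ this yields $\phi(t(W_1))=t(W_2)$, and the same argument works verbatim for $b$, $l$, $r$.

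The main obstacle is the inductive identification $L_\infty\cap W=P_W(L'_\infty)$: one has to unwind the recursion~(\ref{eq:W_n}) carefully and use that composition of the $P_{(\cdot)}$ maps respects the nested subdivision. Once this self-similarity-of-pieces is set up, the second and third statements of the proposition reduce to elementary properties of translations and the previously established uniqueness of exits.
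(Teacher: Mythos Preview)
Your argument is correct and is precisely the ``unwinding of the construction'' that the paper has in mind: the paper's own proof is the single sentence that the statement ``follows from the construction of labyrinth sets and the definition of a mixed labyrinth fractal,'' and indeed it has already remarked just before this proposition that each $L_\infty\cap W$ is a mixed labyrinth fractal scaled by $m(n)$. Your explicit introduction of the shifted sequence $\{{\cal A}_{n+k}\}_{k\ge1}$, the inductive identification of $\{W'\in{\cal W}_{n+k}:W'\subseteq W\}$ with $P_W({\cal W}'_k)$ via $P_{P_W(W'')}=P_W\circ P_{W''}$, and the observation that $P_{W_1},P_{W_2}$ differ by a translation, simply make that one-line proof rigorous.
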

\begin{proof}
 The above statement follows from the construction of labyrinth sets and the definition of a mixed labyrinth fractal.
\end{proof}

\begin{proposition}\label{prop:Boxcounting} If $a$ is an arc between the top and the bottom exit in $L_{\infty}$ then 
\[
\liminf_{n\rightarrow \infty} \frac{\log(\A(n))}{\sum_{k=1}^{n}\log (m_k)}= \underline{\dim}_B(a)\le\overline{\dim}_B(a)= \limsup_{n\rightarrow \infty} \frac{\log(\A(n))}{\sum_{k=1}^{n}\log (m_k)}.
\]

For the other pairs of exits, the analogue statement holds. 
\end{proposition}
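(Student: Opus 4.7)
The plan is to evaluate the box-counting function at the natural scales $\delta_n := 1/m(n)$ and show that $N_{\delta_n}(a)$ is comparable to $\A(n)$, then interpolate to obtain the full dimension. For the upper bound, I would iteratively apply Proposition~\ref{lemma:ArcSimilarity1} together with the arc-construction of Lemma~\ref{lemma:Construction} to conclude that the arc $a$ between the top and bottom exits of $L_{\infty}$ is contained in the union of the $\A(n)$ squares of ${\cal W}_n$ forming the $\A$-path in $\mathcal{G}({\cal W}_n)$. Since each such square has edgelength $1/m(n)$, this yields a cover of $a$ by $\A(n)$ sets of diameter $\sqrt{2}/m(n)$, so $N_{\delta_n}(a)\le \A(n)$.

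For the matching lower bound I would use the uniqueness of the arc in the dendrite $L_{\infty}$ (the Remark after Theorem~\ref{theorem:dendrite}), which forces $a$ to meet each of the $\A(n)$ squares of the $\A$-path. Since any set of diameter $\le 1/m(n)$ can meet at most four cells of the grid $\mathcal{S}_{m(n)}$, any cover of $a$ by sets of diameter $\le \delta_n$ must use at least $\A(n)/4$ sets, giving $N_{\delta_n}(a)\ge \A(n)/4$. Combining, $\log N_{\delta_n}(a)=\log \A(n)+O(1)$, and dividing by $\log(1/\delta_n)=\sum_{k=1}^n\log m_k$ proves the identity along the subsequence $\{\delta_n\}$. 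The analogous argument (replacing $\A$ by $\B,\C,\D,\E,\F$ and the top/bottom pair by the corresponding exit pair) handles the other cases.

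To pass from the subsequence to all $\delta\to 0$, for $\delta\in[\delta_{n+1},\delta_n]$ monotonicity of $N_\delta$ and the above estimates give $\A(n)/4 \le N_\delta(a)\le \A(n+1)$, while $\log m(n)\le \log(1/\delta)\le \log m(n+1)$. The matrix recursion of Proposition~\ref{prop:recursion} supplies the a priori bound $\A(n+1)\le m_{n+1}^2\,\A(n)$ (each column sum of $M_{n+1}$ counts squares in an $m_{n+1}$-pattern and is therefore at most $m_{n+1}^2$). Sandwiching $\log N_\delta/\log(1/\delta)$ between
\[
\frac{\log\A(n)-\log 4}{\log m(n)+\log m_{n+1}}\quad\text{and}\quad\frac{\log\A(n)+2\log m_{n+1}}{\log m(n)},
\]
one argues that the $\liminf$ and $\limsup$ of these bracketing quantities coincide with those of $\log \A(n)/\log m(n)$.

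The main obstacle is precisely this last interpolation step: for arbitrary width-sequences $\{m_k\}$ the relative jump $\log m_{n+1}/\log m(n)$ between consecutive natural scales need not go to zero, and it requires care to argue that the proportional growth of $\A$ and $m(\cdot)$ under a common factor $m_{n+1}^{O(1)}$ is enough to prevent the full $\liminf_{\delta}$ (respectively $\limsup_\delta$) from differing from the one along the natural subsequence. Everything else is a direct adaptation of the self-similar proof in \cite{laby_4x4}, with $m^n$ replaced by $m(n)=\prod_{k\le n}m_k$ throughout.
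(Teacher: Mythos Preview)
Your approach is the same as the paper's: evaluate the box-counting function at the natural grid scales $\delta_n=1/m(n)$, identify $N_{\delta_n}(a)$ with $\A(n)$ up to bounded multiplicative constants, and then invoke the fact that a suitable sequence of scales suffices to compute $\underline{\dim}_B$ and $\overline{\dim}_B$. The paper's proof is much terser than yours: it simply cites Falconer's remark that one may replace $\delta\to 0$ by a sequence $(\delta_k)$, records that here $\delta_k\le\tfrac{1}{3}\delta_{k-1}$, and declares the formul{\ae} to follow. It does not spell out the covering bounds $N_{\delta_n}(a)\asymp\A(n)$ that you supply via Proposition~\ref{lemma:ArcSimilarity1} and Lemma~\ref{lemma:Construction}, nor does it discuss the interpolation between consecutive natural scales.

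Your worry about the interpolation step is well placed, and it is worth noting that the paper does not resolve it either. The standard hypothesis in Falconer's book for passing to a sequence is $\delta_{k+1}\ge c\,\delta_k$ for some fixed $c>0$ (the scales do not shrink too fast), whereas the inequality $\delta_k\le\tfrac{1}{3}\delta_{k-1}$ quoted in the paper is the opposite direction. When the width sequence $(m_k)$ is bounded, Falconer's hypothesis holds and both your argument and the paper's go through immediately; your sandwiching bounds then differ from $\log\A(n)/\log m(n)$ by $o(1)$ because $\log m_{n+1}/\log m(n)\to 0$. For unbounded $(m_k)$ the issue you flag is genuine: the bracket
\[
\frac{\log\A(n)-\log 4}{\log m(n)+\log m_{n+1}}\;\le\;\frac{\log N_\delta(a)}{-\log\delta}\;\le\;\frac{\log\A(n)+2\log m_{n+1}}{\log m(n)}
\]
need not collapse to $\log\A(n)/\log m(n)$, and neither your sketch nor the paper's proof supplies the extra argument needed in that regime. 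In short, your proposal matches the paper's route and adds detail the paper omits; the one gap you identify is real and is shared by the paper's own proof.
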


\begin{proof}
 The above inequalities follow, e.g., by using an alternative definiton of the box counting 
dimension 
\cite[Definition 1.3.]{Falconer_book} and making use of the property that 
one can use, instead of $\delta \to 0$, appropriate sequences $(\delta_k)_{k\ge 0}$ 
for the computation of the box counting dimension (see the cited reference).
For $\delta_k=\frac{1}{m(k)}$, we have $\delta_k \le \frac{1}{3}\delta_{k-1}$, and one immediately gets the above formul\ae.
\end{proof}

\section{Blocked labyrinth patterns}\label{sec:Blocked}
An $m\times m$-labyrinth pattern ${\cal A}$ is called \emph{horizontally blocked} if the row (of squares) 
from the left to the right exit contains at least one black square. It is called \emph{vertically blocked} if the 
column (of squares) from the top to the bottom exit contains at least one black square. Anogously we define for any 
$n \ge 1$ a horizontally or vertically blocked labyrinth set of level $n$.
We note that there are no horizontally or 
vertically blocked $m\times m$-labyrinth patterns for $m<4$. As an example, the labyrinth patterns shown in Figure \ref{fig:A1A2} and \ref{fig:complement} 
are horizontally and vertically blocked, while those in Figure \ref{fig:counterexample_1} are not blocked.

\begin{conjecture}\label{conj:main result}
 Let $\{{\cal A}_k\}_{k=1}^{\infty}$ be a sequence of (both horizontally and vertically) blocked labyrinth patterns,  
 $m_k\ge 4$. 
 For any two points in the limit set $L_{\infty}$ the length of the arc $a\subset L_{\infty}$ that connects them is infinite and the set of all points, where no tangent to $a$ exists, is dense in $a$.
 \end{conjecture}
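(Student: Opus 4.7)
My plan is to combine the arc construction of Lemma \ref{lemma:Construction} with quantitative growth estimates for the path matrices $M(n)$ forced by the blocking hypothesis, and then use a corner-at-every-scale argument to preclude tangents on a dense subset.

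First, for any two distinct points $a,b\in L_{\infty}$, choose $n_0$ with $W_n(a)\neq W_n(b)$ for all $n\ge n_0$. By Lemma \ref{lemma:Construction} the unique arc $\alpha$ between $a$ and $b$ equals $\bigcap_{n\ge n_0}\bigcup_{W\in p_n(W_n(a),W_n(b))}W$. Using Proposition \ref{lemma:ArcSimilarity1}, for every $n$ and every square $W\in\mathcal{W}_n$ on the path of some type $X\in\{\A,\B,\C,\D,\E,\F\}$, the set $W\cap\alpha$ is itself an arc between two specific exits of the scaled mixed labyrinth fractal $L_{\infty}|W$ generated by the tail sequence $(\mathcal{A}_{n+1},\mathcal{A}_{n+2},\dots)$. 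This reduces both parts of the conjecture to quantitative statements about exit-to-exit arcs at every scale.

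Second, for the infinite-length claim, I would establish lower bounds on the row sums of $M_k$. Since $\mathcal{A}_k$ is both horizontally and vertically blocked, the $\A_k$- and $\B_k$-paths each detour around at least one obstructing black square, giving $\A_k,\B_k\ge m_k+2$; using the tree Property \ref{prop1} together with Lemma \ref{lemma:Steinhaus} one argues analogously that every row sum $R_i(M_k)$ of the path matrix exceeds $m_k$ by a definite margin, i.e.\ $R_j(M_k)\ge(1+c_k/m_k)m_k$ with $c_k\ge 2$. From $M(n)=M(n-1)M_n$ one derives
\[
R_i(M(n))=\sum_{j}M(n-1)_{ij}\,R_j(M_n)\ge\bigl(1+c_n/m_n\bigr)m_n\cdot R_i(M(n-1)),
\]
hence $R_i(M(n))/m(n)\ge\prod_{k=1}^{n}(1+c_k/m_k)$. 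Proposition \ref{lemma:m^n} then gives that any parametrization of the corresponding sub-arc has length at least $(R_i(M(n))-1)/(2m(n))$; if the product above diverges, applying this bound inside arbitrarily fine subsquares of $\alpha$ pushes the total length to infinity.

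Third, for the density of tangent-free points, every $\C$-, $\D$-, $\E$-, or $\F$-square along the path forces the arc inside it to enter one side and leave a perpendicular side. At the shared exit vertex between two successive perpendicular squares the arc is pinched between two orthogonal directions, so no tangent can exist there. The blocking property forces corner-squares to appear inside every $\A$- and $\B$-sub-path at the next level as well; iterating the arc construction, such corner points therefore accumulate at every scale inside every open sub-arc of $\alpha$ and are dense. The main obstacle is establishing $R_i(M(n))/m(n)\to\infty$ \emph{uniformly} over arbitrary sequences of blocked patterns: the naive detour bound $R_i(M_k)\ge m_k+2$ ensures divergence only when $\sum 1/m_k=\infty$, which may fail for rapidly growing width-sequences. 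A finer combinatorial argument is required — showing, for example, that each detour itself generates further detours at the next level, producing multiplicative rather than merely additive expansion. A parallel delicate point is upgrading ``corners appear at every level'' to ``corners are dense in every open sub-arc'', and these two issues are precisely why the statement remains a conjecture.
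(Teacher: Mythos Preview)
The paper does not prove this statement; it is explicitly presented as a conjecture with no accompanying argument, so there is no proof in the paper to compare your proposal against. Your outline is a reasonable program for attacking the conjecture, and you correctly isolate the principal obstruction: the naive detour bound only yields $\prod_k(1+2/m_k)$, which need not diverge for rapidly growing width sequences.

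One further gap you should flag: your assertion that \emph{every} row sum $R_j(M_k)$ exceeds $m_k$ by a definite margin is not justified for the four corner-type paths. Blocking directly constrains the top--bottom and left--right paths, but a path from, say, the top exit to the right exit need not traverse the full width or height of the pattern and may have length well below $m_k$ (if those two exits happen to lie close together the path can be very short). The matrix recursion therefore cannot be bounded row by row as you suggest; any successful argument must exploit the coupling between rows of $M(n)$---that short corner paths feed into long straight paths at the next level and vice versa---rather than a uniform row-sum bound. This is closely related to the ``multiplicative rather than additive expansion'' issue you mention at the end, but it already arises at the level of a single $M_k$, not only in the infinite product.
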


\section{Rectangular mixed labyrinth sets and fractals}\label{sec:Rectangular}
In order to define \emph{rectangular mixed labyrinth sets and fractals} we introduce the following function $P_R$, 
in analogy with $P_Q$ mentioned in Section \ref{sec:Construction}. 
Let $x,y,p,q\in [0,1]$ such that $R=[x,x+p]\times [y,y+q]\subseteq [0,1]\times [0,1]$. 
Then for any point $(z_x,z_y)\in[0,1]\times [0,1]$ we define the function
\[
P_R(z_x,z_y)=(p z_x+x,q z_y+y).
\]
For any integers $m,s\ge 1$ , let $S_{i,j;m,s}=\{(x,y)\mid \frac{i}{m}\le x \le \frac{i+1}{m} \mbox{ and } \frac{j}{s}\le y \le \frac{j+1}{s} \}$ and  
${\cal S}_{m\times r}=\{S_{i,j;m,s}\mid 0\le i\le m-1 \mbox{ and } 0\le j\le s-1 \}$. 

We call any nonempty ${\cal A} \subseteq {\cal S}_{m \times s}$ a \emph{rectangular} 
$m\times s$-\emph{pattern}, and  
 $(m,s)$ the \emph{widths vector} of ${\cal A}$. A rectangular $m\times s$-pattern is called a
\emph{rectangular} $m\times s$-{labyrinth pattern}, if it satisfies Properties \ref{prop1}, \ref{prop2}, 
and \ref{prop3}, in Section \ref{sec:Definition}. 
Let $\{{\cal A}_k\}_{k=1}^{\infty}$ 
be a sequence of non-empty rectangular patterns and $\{(m_k,s_k)\}_{k=1}^{\infty}$ be 
the sequence of the corresponding widths vectors, i.e., for all $k\ge 1$ we have 
${\cal A}_k\subseteq {\cal S}_{m_k\times s_k}$. 
We denote $m(n)=\prod_{k=1}^n m_k$, and $s(n)=\prod_{k=1}^n s_k$, for all $n \ge 1$. In this case 
the set ${\cal W}_n \subset {\cal S}_{m(n)\times s(n)}$ of white rectangles of level $n$, 
 is defined as in Equation \eqref{eq:W_n}, Section \ref{sec:Construction}, 
 and the set ${\cal B}_n$ of black rectangles of level $n$, correspondingly. 
 
\par The results shown in Sections $4$ and $5$ also hold (with analogous proofs) in the case 
of \emph{rectangular mixed labyrinth sets and fractals}. Here, in the proof of 
Theorem \ref{theorem:dendrite} we take into account the fact that the diameter of any rectangle 
in ${\cal W}_n$
is strictly less than $\frac{\sqrt{2}}{2^n}$. The results in Proposition \ref{prop:exits_coordinates} and 
Proposition \ref{prop:Boxcounting} hold with small modifications, which we skip here. 
\par We remark that rectangular mixed labyrinth fractals are related to the general Sierpi\'nski 
carpets studied by McMullen \cite{McMullen}. We mention that, on the one hand, McMullen uses the same pattern at each step 
of the construction, and, on the other hand, no restrictions are imposed on the pattern (except that it is not trivial).  

\section{Wild labyrinth fractals and mixed wild labyrinth fractals}\label{sec:Wild}

\begin{figure}
\begin{center}
 \begin{tikzpicture}[scale=.35]

\draw[line width=2pt] (0,0) rectangle (14,14);
\draw[line width=2pt] (2, 0) -- (2,14);
\draw[line width=2pt] (4, 0) -- (4,14);
\draw[line width=2pt] (6, 0) -- (6,14);
\draw[line width=2pt] (8, 0) -- (8,14);
\draw[line width=2pt] (10, 0) -- (10,14);
\draw[line width=2pt] (12, 0) -- (12,14);
\draw[line width=2pt] (0, 2) -- (14,2);
\draw[line width=2pt] (0, 4) -- (14,4);
\draw[line width=2pt] (0, 6) -- (14,6);
\draw[line width=2pt] (0, 8) -- (14,8);
\draw[line width=2pt] (0, 10) -- (14,10);
\draw[line width=2pt] (0, 12) -- (14,12);
\filldraw[fill=black, draw=black] (12,0) rectangle (14, 2);
\filldraw[fill=black, draw=black] (2,2) rectangle (6, 4);
\filldraw[fill=black, draw=black] (8,2) rectangle (10, 6);
\filldraw[fill=black, draw=black] (0,4) rectangle (4, 6);
\filldraw[fill=black, draw=black] (0,8) rectangle (4, 14);
\filldraw[fill=black, draw=black] (10,4) rectangle (14, 6);
\filldraw[fill=black, draw=black] (4,10) rectangle (6, 14);
\filldraw[fill=black, draw=black] (6,6) rectangle (8, 8);
\filldraw[fill=black, draw=black] (10,8) rectangle (14, 10);
\filldraw[fill=black, draw=black] (8,10) rectangle (14, 14);

\hspace{0.4cm}
\draw[line width=2pt] (15,0) rectangle (27,12);
\draw[line width=2pt] (17, 0) -- (17,12);
\draw[line width=2pt] (19, 0) -- (19,12);
\draw[line width=2pt] (21, 0) -- (21,12);
\draw[line width=2pt] (23, 0) -- (23,12);
\draw[line width=2pt] (25, 0) -- (25,12);
\draw[line width=2pt] (15, 2) -- (27,2);
\draw[line width=2pt] (15, 4) -- (27,4);
\draw[line width=2pt] (15, 6) -- (27,6);
\draw[line width=2pt] (15, 8) -- (27,8);
\draw[line width=2pt] (15, 10) -- (27,10);
 \filldraw[fill=black, draw=black] (15,0) rectangle (17, 4);
 \filldraw[fill=black, draw=black] (19,0) rectangle (27, 2);
\filldraw[fill=black, draw=black] (23,2) rectangle (27, 4);
 \filldraw[fill=black, draw=black] (19,4) rectangle (21, 6);
 \filldraw[fill=black, draw=black] (15,6) rectangle (17, 8);
 \filldraw[fill=black, draw=black] (23,6) rectangle (25, 10);
 \filldraw[fill=black, draw=black] (15,8) rectangle (19, 10);
 \filldraw[fill=black, draw=black] (21,10) rectangle (27, 12);
 \end{tikzpicture}
\end{center}
\caption{Examples: two wild labyrinth patterns, both vertically and horizontally blocked}
\label{fig:wild_pattern}
\end{figure}
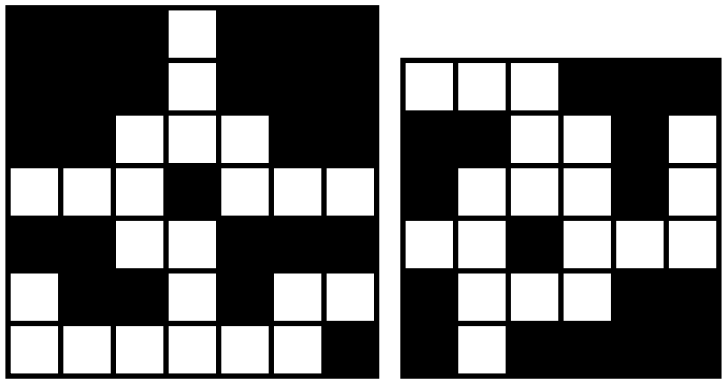
In the setting of self-similar labyrinth fractals \cite{laby_4x4, laby_oigemoan}, i.e., if 
${\cal A}_k={\cal A}_1$, 
for all $k\ge 1,$ let us weaken the conditions that define labyrinth patterns: 
instead of asking Property \ref{prop1} and Property \ref{prop2} to be satisfied, we
use here the following two properties.

\begin{property}[``Wild Property 1''] \label{prop:wild1}${\mathcal G} (A_1)$ is a connected 
graph.

\end{property}
\begin{property}[``Wild Property 2''] \label{prop:wild2} 
${\cal A}$ has at least one vertical exit exit pairpair, and at least one horizontal exit pair.
\end{property}
We call a pattern that satisfies Property \ref{prop:wild1}, Property \ref{prop:wild2}, and Property \ref{prop3} a \emph{wild 
labyrinth pattern}. We note that every labyrinth pattern is also a wild labyrinth pattern. 
Figure \ref{fig:wild_pattern} shows on the left a wild labyrinth pattern with two left and two right exits, 
and on the right a wild labyrinth patternd whose graph contains cycles.

We call the self-similar limit set generated by a wild labyrinth pattern a \emph{wild labyrinth fractal}. 
We call \emph{mixed wild labyrinth fractal} a mixed labyrinth fractal generated by a sequence of wild labyrinth patterns.

\begin{proposition}\label{prop:mixed wild connected}
 Every (mixed) wild labyrinth fractal is connected.
\end{proposition}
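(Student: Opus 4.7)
The plan is to show that $L_\infty$ is connected by first proving that each prefractal $L_n$ is connected, and then invoking the classical fact that a decreasing intersection of nonempty compact connected sets in a Hausdorff space is connected.

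First, I would prove by induction on $n$ that the graph $\mathcal{G}(\mathcal{W}_n)$ is connected whenever $\mathcal{A}_1,\dots,\mathcal{A}_n$ are wild labyrinth patterns. The base case $n=1$ is immediate, since $\mathcal{W}_1=\mathcal{A}_1$ and Wild Property~\ref{prop:wild1} says $\mathcal{G}(\mathcal{A}_1)$ is connected. For the inductive step, assume $\mathcal{G}(\mathcal{W}_{n-1})$ is connected. By definition, $\mathcal{W}_n$ is obtained by replacing each $W_{n-1}\in\mathcal{W}_{n-1}$ with the scaled copy $P_{W_{n-1}}(\mathcal{A}_n)$. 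Two facts suffice:
\begin{itemize}
\item[(a)] Within each block $P_{W_{n-1}}(\mathcal{A}_n)$, the induced subgraph of $\mathcal{G}(\mathcal{W}_n)$ is isomorphic to $\mathcal{G}(\mathcal{A}_n)$, hence connected by Wild Property~\ref{prop:wild1}.
\item[(b)] If $W_{n-1},W'_{n-1}\in\mathcal{W}_{n-1}$ are adjacent, say horizontally with $W'_{n-1}$ just to the right of $W_{n-1}$, then Wild Property~\ref{prop:wild2} provides a horizontal exit pair of $\mathcal{A}_n$, i.e.\ a white square in the right column and a white square in the same row of the left column of $\mathcal{A}_n$. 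Under $P_{W_{n-1}}$ and $P_{W'_{n-1}}$ these become two white squares of level $n$ that share a common side, providing an edge of $\mathcal{G}(\mathcal{W}_n)$ between the two blocks. The vertical case is analogous.
\end{itemize}
Combining (a) and (b) with the induction hypothesis, one obtains a path in $\mathcal{G}(\mathcal{W}_n)$ between any two white squares of level $n$ (go from each square to the boundary of its enclosing level-$(n-1)$ block via (a), then across adjacent blocks via (b) using the path provided in $\mathcal{G}(\mathcal{W}_{n-1})$).

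Second, since any two squares of $\mathcal{W}_n$ joined by an edge of $\mathcal{G}(\mathcal{W}_n)$ share a whole side and hence have connected union, the connectedness of $\mathcal{G}(\mathcal{W}_n)$ together with the fact that each $W\in\mathcal{W}_n$ is itself a connected compact subset of $\mathbb{R}^2$ implies that $L_n=\bigcup_{W\in\mathcal{W}_n}W$ is compact and connected for every $n\ge 1$. The sequence $\{L_n\}_{n\ge 1}$ is monotonically decreasing (as already noted in Section~\ref{sec:Construction}), so $L_\infty=\bigcap_{n\ge 1}L_n$ is a decreasing intersection of nonempty compact connected subsets of $\mathbb{R}^2$, and therefore itself connected.

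The main obstacle is step (b): one has to see that an exit pair of $\mathcal{A}_n$ is exactly what bridges two neighbouring level-$(n-1)$ blocks. This mirrors the analogous argument for ordinary labyrinth patterns in Lemma~\ref{lemma:Prop123}, but is easier in the present context since we only need connectivity and not the tree property; Property~\ref{prop3} plays no role, and Wild Property~\ref{prop:wild1} together with Wild Property~\ref{prop:wild2} suffices.
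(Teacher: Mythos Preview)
Your argument is correct. The inductive proof that $\mathcal{G}(\mathcal{W}_n)$ is connected is sound: step~(a) uses Wild Property~\ref{prop:wild1}, and step~(b) correctly exploits that the left and right exits of $\mathcal{A}_n$ lie in the \emph{same} row (this is the definition of an exit pair), so that when the pattern is placed in two horizontally adjacent level-$(n-1)$ squares the right exit of the left copy and the left exit of the right copy share a side. From the connectedness of each $L_n$ the conclusion for $L_\infty$ follows from the nested-intersection theorem, exactly as you say.

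Your route differs from the paper's. The paper does not spell out a proof; it simply remarks that one can invoke the results on connected generalised Sierpi\'nski carpets from~\cite{connected_general_carpets}. Your argument is instead a direct, self-contained adaptation of the connectedness part of the proof of Theorem~\ref{theorem:dendrite} (and of Lemma~\ref{lemma:Prop123}) to the wild setting: you discard the tree and corner conditions, retaining only what is needed for connectivity. The advantage of your approach is that it is elementary and keeps the paper self-contained; the advantage of the paper's reference is brevity and the observation that wild labyrinth patterns fit into an already developed framework for generalised carpets. Either way, Property~\ref{prop3} is indeed irrelevant here, as you note.
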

One way to prove the above proposition is by using recent results on connected generalised Sierpi\'nski carpets 
\cite{connected_general_carpets}.

We call a wild labyrinth pattern horizontally blocked if each of its horizontal exit pairs (i.e., a left  exit together with the corresponding right exit) is blocked, i.e. there is at least one black square in the row that contains it. 

\begin{conjecture}\label{conj:wild blocked}
 If ${\cal A}$ is a horizontally and vertically blocked wild labyrinth pattern, then 
the self-similar wild 
labyrinth fractal $L_{\infty}$ generated by ${\cal A}$ has the property that for any two distinct 
points $x,y \in L_{\infty}$ the length of an arc that connects them in $L_{\infty}$ is infinite.
\end{conjecture}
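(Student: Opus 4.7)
The plan is to bound below the length of any arc $a \subset L_\infty$ between two distinct points $x \neq y$ by means of the prefractal path-length estimate from Proposition~\ref{lemma:m^n}. Since $x \neq y$, for all sufficiently large $n$ they lie in distinct level-$n$ white squares $W_n(x), W_n(y)$. The arc $a$ traverses a sequence of level-$n$ white squares that contains a path $P_n$ in $\mathcal{G}(\mathcal{W}_n)$ from $W_n(x)$ to $W_n(y)$; let $k_n$ denote its length. By Proposition~\ref{lemma:m^n}, the length of any parametrisation of $a$ is at least $(k_n-1)/(2 m^n)$. Hence it suffices to prove $k_n/m^n \to \infty$.

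To bound $k_n$ from below I would exploit the self-similarity of $L_\infty$. Fix $n_0$ with $W_{n_0}(x)\neq W_{n_0}(y)$, and let $V_0 = W_{n_0}(x), V_1, \ldots, V_r = W_{n_0}(y)$ be the sequence of level-$n_0$ white squares visited by $a$. For every intermediate $V_i$, the restriction $a \cap V_i$ is itself an arc between two distinct exits of the scaled copy $L_\infty \cap V_i$. Hence the portion of $P_n$ inside each $V_i$ has length at least $\mu(n-n_0)$, where $\mu(j)$ denotes the minimum, over all ordered pairs of distinct exits of $\mathcal{W}_j$, of the shortest path length in $\mathcal{G}(\mathcal{W}_j)$ between them. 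Consequently $k_n \geq \mu(n-n_0)$, and the problem reduces to establishing $\mu(n)/m^n \to \infty$.

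The heart of the proof is a spectral analysis of the substitution matrix $M$ associated to $\mathcal{A}$, a nonnegative square matrix generalising the $6\times 6$ path matrix of Proposition~\ref{prop:recursion} so as to accommodate the multiple exit pairs and possible cycles of a wild pattern. Iterates $M^n \mathbf{1}$ encode the relevant path lengths at level $n$, and their asymptotic growth is governed by the Perron-Frobenius eigenvalue $\rho$ of $M$. The desired estimate is equivalent to $\rho > m$. The blocked hypothesis enters precisely here: every top-to-bottom and every left-to-right exit-to-exit path in $\mathcal{G}(\mathcal{A})$ must detour around at least one black square in its defining column or row, and so contains at least $m+2$ squares, strictly more than the $m$ squares of a direct crossing. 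A positivity/primitivity argument should then propagate this strict inequality from the rows of $M$ corresponding to $A$- and $B$-type paths to the spectral radius, yielding $\rho > m$.

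The main obstacle lies in the wild regime itself. In a wild pattern, shortest exit-to-exit paths need not be of type $A$ or $B$: a $C$-type path from a top exit to a right exit can have length $m$ without crossing any blocked row or column, and a priori an iterated strategy using only $C,D,E,F$ subsquares could keep $\mu(n)$ comparable to $m^n$. The delicate step is thus to verify that $M$ is primitive, or equivalently that every exit-to-exit path type must, after enough iterations, absorb a positive fraction of $A$- and $B$-type subsquares. Property~\ref{prop3} (excluding diagonally opposite white corners) should be the key combinatorial ingredient, since it prevents any sub-path from hugging two adjacent sides throughout the iterations. Should a direct matrix analysis prove elusive, an alternative route is to combine the wild-pattern analogue of Proposition~\ref{prop:Boxcounting} with a pigeon-hole argument to show that every arc has box-counting dimension strictly greater than $1$, which immediately yields infinite length.
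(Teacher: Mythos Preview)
The statement you are trying to prove is presented in the paper as an \emph{open conjecture}, not a theorem; the paper gives no proof and in fact explains why the standard machinery breaks down in the wild case. So there is no ``paper's own proof'' to compare against, and your proposal must be judged on whether it actually closes the gap. It does not.

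The decisive problem is your reliance on a substitution matrix $M$ whose iterates $M^n\mathbf{1}$ are supposed to encode path lengths at level $n$. This is exactly what the paper shows to be false for wild patterns: immediately after stating the conjecture, the authors observe that Lemma~\ref{lemma:Construction} fails in general and that paths between exits in $\mathcal{G}(\mathcal{W}_n)$ are not unique. They then exhibit a concrete blocked wild pattern (Figure~\ref{fig:counterexample_3}) for which the shortest top-to-bottom path in $\mathcal{G}(\mathcal{W}_2)$ is \emph{not} contained in the level-$1$ squares of the shortest top-to-bottom path in $\mathcal{G}(\mathcal{W}_1)$. In other words, shortest exit-to-exit paths are not recursively self-similar, so there is no matrix $M$ with the property you assume, and the Perron--Frobenius argument has nothing to act on. Your acknowledgement that ``the main obstacle lies in the wild regime itself'' is correct, but the remedies you sketch---primitivity via Property~\ref{prop3}, or a box-counting dimension argument---do not address this non-recursiveness; primitivity of a matrix is moot when the matrix does not govern the quantity $\mu(n)$ you need to control.

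There is also a smaller gap earlier: your appeal to Proposition~\ref{lemma:m^n} is for paths between \emph{exits} of $\mathcal{W}_n$, whereas $W_n(x)$ and $W_n(y)$ are arbitrary white squares; and since a wild labyrinth fractal need not be a dendrite, the sequence of level-$n$ squares visited by an arc $a$ need not form a simple path in $\mathcal{G}(\mathcal{W}_n)$, so the extraction of $P_n$ requires more care than stated. These are fixable, but the collapse of the substitution structure is the genuine obstruction, and your outline does not get past it.
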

In the case of wild labyrinth fractals Lemma \ref{lemma:Construction} does not hold in general. 
Moreover, in this case the path between two exits in ${\cal G}({\cal W}_n)$ is in general not unique.
The pattern in Figure \ref{fig:counterexample_3} generates a (self-similar) 
wild labyrinth fractal with the following property: the squares of level $2$ that lie on 
the shortest path in ${\cal G}({\cal W}_2)$ from the top to the bottom exit in ${\cal G}({\cal W}_2)$ 
are not contained in the white squares which correspond to the shortest path between the top and bottom exit in
${\cal G}({\cal W}_1)$, as it can be easily checked, by comparing the paths in ${\cal G}({\cal W}_1)$ and then in ${\cal G}({\cal W}_2)$, that connect the top and bottom exit in each case. This gives an example for the case when 
the methods used in Proposition \ref{prop:recursion} do not work for the construction of the shortest 
arc between two exits as they worked for 
mixed labyrinth fractals and self-similar labyrinth fractals.

\begin{figure}
\begin{center}
 \begin{tikzpicture}[scale=.25]
\draw[line width=2pt] (0,0) rectangle (18,18);
\draw[line width=2pt] (2, 0) -- (2,18);
\draw[line width=2pt] (4, 0) -- (4,18);
\draw[line width=2pt] (6, 0) -- (6,18);
\draw[line width=2pt] (8, 0) -- (8,18);
\draw[line width=2pt] (10, 0) -- (10,18);
\draw[line width=2pt] (12, 0) -- (12,18);
\draw[line width=2pt] (14, 0) -- (14,18);
\draw[line width=2pt] (16, 0) -- (16,18);
\draw[line width=2pt] (0, 2) -- (18,2);
\draw[line width=2pt] (0, 4) -- (18,4);
\draw[line width=2pt] (0, 6) -- (18,6);
\draw[line width=2pt] (0, 8) -- (18,8);
\draw[line width=2pt] (0, 10) -- (18,10);
\draw[line width=2pt] (0, 12) -- (18,12);
\draw[line width=2pt] (0, 14) -- (18,14);
\draw[line width=2pt] (0, 16) -- (18,16);
\filldraw[fill=black, draw=black] (0,0) rectangle (8, 2);
\filldraw[fill=black, draw=black] (10,0) rectangle (18, 2);
\filldraw[fill=black, draw=black] (0,2) rectangle (6, 4);
\filldraw[fill=black, draw=black] (14,2) rectangle (18, 8);
\filldraw[fill=black, draw=black] (0,4) rectangle (4, 6);
\filldraw[fill=black, draw=black] (0,6) rectangle (2, 8);
\filldraw[fill=black, draw=black] (8,4) rectangle (12, 14);
\filldraw[fill=black, draw=black] (14,10) rectangle (18, 18);
\filldraw[fill=black, draw=black] (10,16) rectangle (14, 18);
\filldraw[fill=black, draw=black] (6,6) rectangle (8, 12);
\filldraw[fill=black, draw=black] (4,8) rectangle (6, 10);
\filldraw[fill=black, draw=black] (0,10) rectangle (2, 12);
\filldraw[fill=black, draw=black] (0,12) rectangle (4, 14);
\filldraw[fill=black, draw=black] (0,14) rectangle (6, 16);
\filldraw[fill=black, draw=black] (0,16) rectangle (8, 18);

\end{tikzpicture}
\end{center}
\caption{A wild (horizontally and vertically blocked) labyrinth pattern for which the squares in the shortest path 
from the top exit to the bottom exit in ${\cal G}({\cal W}_2)$ does not lie whithin
the shortest path from the top exit to the bottom exit in ${\cal G}({\cal W}_1)$}
\label{fig:counterexample_3}
\end{figure}
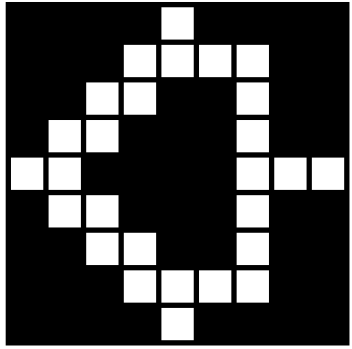
{\bf Remark.} Conjecture \ref{conj:wild blocked} can be also formulated for mixed wild labyrinth fractals, with some restrictions.
\section{Connections with results about Sierpi\'nski carpets}\label{sec:Totally disco}

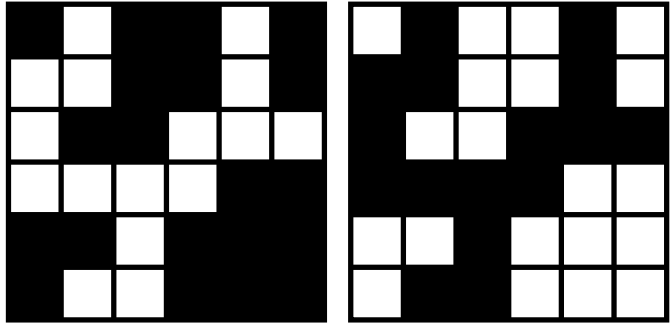
\begin{figure}
\begin{center}
 \begin{tikzpicture}[scale=.35]

\draw[line width=2pt] (0,0) rectangle (12,12);
\draw[line width=2pt] (2, 0) -- (2,12);
\draw[line width=2pt] (4, 0) -- (4,12);
\draw[line width=2pt] (6, 0) -- (6,12);
\draw[line width=2pt] (8, 0) -- (8,12);
\draw[line width=2pt] (10, 0) -- (10,12);
\draw[line width=2pt] (0, 2) -- (12,2);
\draw[line width=2pt] (0, 4) -- (12,4);
\draw[line width=2pt] (0, 6) -- (12,6);
\draw[line width=2pt] (0, 8) -- (12,8);
\draw[line width=2pt] (0, 10) -- (12,10);
\filldraw[fill=black, draw=black] (0,0) rectangle (2, 4);
\filldraw[fill=black, draw=black] (2,2) rectangle (4, 4);
\filldraw[fill=black, draw=black] (6,0) rectangle (12, 4);
\filldraw[fill=black, draw=black] (8,4) rectangle (12, 6);
\filldraw[fill=black, draw=black] (2,6) rectangle (6, 8);
\filldraw[fill=black, draw=black] (4,8) rectangle (8, 12);
\filldraw[fill=black, draw=black] (0,10) rectangle (2, 12);
\filldraw[fill=black, draw=black] (10,8) rectangle (12, 12);

\draw[line width=2pt] (13,0) rectangle (25,12);
\draw[line width=2pt] (15, 0) -- (15,12);
\draw[line width=2pt] (17, 0) -- (17,12);
\draw[line width=2pt] (19, 0) -- (19,12);
\draw[line width=2pt] (21, 0) -- (21,12);
\draw[line width=2pt] (23, 0) -- (23,12);
\draw[line width=2pt] (13, 2) -- (25,2);
\draw[line width=2pt] (13, 4) -- (25,4);
\draw[line width=2pt] (13, 6) -- (25,6);
\draw[line width=2pt] (13, 8) -- (25,8);
\draw[line width=2pt] (13, 10) -- (25,10);
 \filldraw[fill=black, draw=black] (15,0) rectangle (19, 2);
 \filldraw[fill=black, draw=black] (17,2) rectangle (19, 4);
\filldraw[fill=black, draw=black] (13,4) rectangle (21, 6);
 \filldraw[fill=black, draw=black] (13,6) rectangle (15, 8);
 \filldraw[fill=black, draw=black] (19,6) rectangle (25, 8);
 \filldraw[fill=black, draw=black] (13,8) rectangle (17, 10);
 \filldraw[fill=black, draw=black] (21,8) rectangle (23, 10);
 \filldraw[fill=black, draw=black] (15,10) rectangle (17, 12);
 \filldraw[fill=black, draw=black] (21,10) rectangle (23, 12);
 \end{tikzpicture}
\end{center}
\caption{A ($6 \times 6$-labyrinth) pattern ${\cal A}$ and its complementary pattern ${\overline{\cal A}}$}\label{fig:complement}
\end{figure}
If ${\cal A}\in {\cal S}_m$ is an $m$-pattern, we call \emph{the complementary pattern of } ${\cal A}$ the pattern denoted by 
$\overline{\cal A}$ that is defined by $\overline{\cal A}={\cal S}_m \setminus {\cal A}$, i.e., the pattern obtained by 
recolouring the squares in ${\cal A}_k$ such that
the black squares are recoloured in white squares and the white squares are recoloured in black, see, e.g., 
Figure \ref{fig:complement}.
With methods and results of a recent paper on totally disconnected Sierpi\'nski carpets 
\cite{totally_disco} one can prove the following result.
\begin{proposition}
 If $({\cal A}_k)_{k \ge 0}$ is a sequence of labyrinth patterns, then the limit set generated by the sequence of the 
corresponding complementary
patterns $(\bar{\cal A}_k)_{k \ge 0}$ is a totally disconnected generalised Sierpi\'nski carpet.
\end{proposition}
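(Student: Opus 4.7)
The plan is to invoke the main characterization of totally disconnected generalized Sierpi\'nski carpets from \cite{totally_disco}. I expect that characterization to take the form of a combinatorial condition imposed separately on each pattern of the generating sequence: roughly, that every white square of the pattern can be joined to the boundary of the unit square by a chain of white squares in which consecutive members share a side or a corner. Under the hypotheses of the proposition, the generating patterns for the carpet are the complementary patterns $\bar{\mathcal{A}}_k$, whose white squares are exactly the black squares of the labyrinth patterns $\mathcal{A}_k$.

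The key observation is that Lemma \ref{lemma:Steinhaus}, applied at level $n=1$ to each individual labyrinth pattern $\mathcal{A}_k$, yields exactly the reachability property just described: every black square of $\mathcal{A}_k$ is connected, in the side-or-corner adjacency graph on black squares, to a black border square. Under the identification ``black square of $\mathcal{A}_k$ = white square of $\bar{\mathcal{A}}_k$'', this is the required condition on every $\bar{\mathcal{A}}_k$. Applying the criterion of \cite{totally_disco} to the sequence $(\bar{\mathcal{A}}_k)_{k\ge 1}$ then gives total disconnectedness of the associated generalized Sierpi\'nski carpet.

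The main obstacle will be matching the precise formulation of the criterion in \cite{totally_disco} to the output of Lemma \ref{lemma:Steinhaus}. One has to check that (i) the adjacency used in the target criterion is genuinely side-or-corner (rather than side only, which would not be provided by Lemma \ref{lemma:Steinhaus}), and (ii) any auxiliary assumptions in \cite{totally_disco} --- for example, lower bounds on the widths $m_k$, connectedness of the ``black'' part of each pattern, or non-triviality conditions on the boundary rows and columns --- are automatic for complementary labyrinth patterns. Here Property \ref{prop3} (no diagonally opposite white corners of $\mathcal{A}_k$) and Property \ref{prop2} (unique exit pairs) are the natural tools for handling the boundary and corner requirements, while the tree structure of the white squares (Property \ref{prop1}) is already absorbed into Lemma \ref{lemma:Steinhaus}. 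If the criterion requires a stronger separation property, the fallback is to argue directly: since $L_\infty$ from Theorem \ref{theorem:dendrite} is a dendrite whose complementary construction replaces every black region by a scaled copy of such a dendritic pattern, every point of $\bar L_\infty$ is, at arbitrarily small scales, surrounded by a simple closed curve disjoint from $\bar L_\infty$ (built out of the side-or-corner black chains reaching the boundary), which is precisely the topological content of total disconnectedness.
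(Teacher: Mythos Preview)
Your approach is essentially the one the paper has in mind: the paper offers no detailed argument and simply states that the result follows ``with methods and results'' of \cite{totally_disco}, which is exactly the criterion you propose to invoke, and Lemma~\ref{lemma:Steinhaus} (at level $n=1$, one pattern at a time) is indeed the natural bridge between the labyrinth hypotheses and the side-or-corner reachability condition on the white squares of $\bar{\mathcal A}_k$.

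One point deserves emphasis. The paper's own remark immediately following the proposition confirms the obstacle you anticipated: the hypotheses of \cite[Theorem~1]{totally_disco} are \emph{not} literally satisfied by the complementary patterns $\bar{\mathcal A}_k$, so a one-line citation does not suffice. What the paper intends --- and what your plan should commit to --- is to rerun the \emph{proof} of that theorem rather than quote its statement, feeding in the conclusion of Lemma~\ref{lemma:Steinhaus} at the place where the original argument uses its own combinatorial hypothesis. Your fallback sketch (enclosing each point of $\bar L_\infty$ at every scale by a simple closed curve in the complement, assembled from the side-or-corner black chains to the border) is precisely the geometric content of that proof, so this is the right direction; just be aware that you will be adapting the argument of \cite{totally_disco}, not merely citing it.
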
 
{\bf Remark} We note that the case of generalised Sierpi\'nski carpets generated by a sequence of complementary patterns
of labyrinth patterns is an example of a situation when the main result about totally disconnected 
Sierpi\'nski carpets \cite[Theorem 1]{totally_disco}
 holds under conditions, that differ partially
from the sufficient conditions of total disconnectedness given in the cited paper.
\\[0.2cm]
{\bf Acknowledgement.} The authors thank the referee for his/her useful comments.

\end{document}